\providecommand{\U}[1]{\protect\rule{.1in}{.1in}}
\newtheorem{theorem}{Theorem}
{}
\newtheorem{corollary}{Corollary}
\newtheorem{example}{Example}
\newtheorem{lemma}{Lemma}
{}
\newtheorem{proposition}{Proposition}
\newtheorem{remark}{Remark}
\newenvironment{proof}[1][Proof]{\textbf{#1.} }{\ \rule{0.5em}{0.5em}}
\begin{document}

\title{On Exact Estimates of Instability Zones of the Hill's Equation with Locally
Integrable Potential}
\author{O.A.Veliev\\{\small Dogus University, \ Istanbul, Turkey.}\\\ {\small e-mail: oveliev@dogus.edu.tr}}
\date{}
\maketitle

\begin{abstract}
In this paper we consider the one-dimensional Schr\"{o}dinger operator $L(q)$
with a periodic real and locally integrable potential $q$. We study the bands
and gaps in the spectrum and explicitly write out the first and second terms
of the asymptotic formulas for the length of the gaps in the spectrum.

Key Words: One-dimensional Schr\"{o}dinger operator, Spectral bands, Periodic potential.

AMS Mathematics Subject Classification: 34L20, 47E05.

\end{abstract}

\section{Introduction and Preliminary Facts}

In this paper we investigate the spectrum of the operator $L(q)$ generated in
$L_{2}(-\infty,\infty)$ by the differential expression
\begin{equation}
l(y)=-y^{^{\prime\prime}}+qy, \tag{1}%
\end{equation}
where $q$ is $1$-periodic integrable on $[0,1]$ and a real-valued potential.
Without loss of generality, it is assumed that
\begin{equation}
\int_{0}^{1}q(x)dx=0. \tag{2}%
\end{equation}

It is well-known that (see for example [2-4, 6, 7, 9, 11]) the spectrum
$\sigma(L(q))$ of $L(q)$ is the union of the spectra $\sigma(L_{t}(q))$ of the
operators $L_{t}(q)$ for $t\in(-\pi,\pi]$ generated in $L_{2}[0,1]$ by (1) and
the boundary conditions
\begin{equation}
y(1)=e^{it}y(0),\text{ }y^{^{\prime}}(1)=e^{it}y^{^{\prime}}(0). \tag{3}%
\end{equation}
Moreover, $\sigma(L(q))$ consists of the closed intervals whose end points are
the eigenvalues of $L_{0}(q)$ and $L_{\pi}(q).$ Therefore, to study the
spectrum of $L(q)$ it is enough to investigate the eigenvalues of $L_{0}(q)$
and $L_{\pi}(q),$ which are called periodic and antiperiodic eigenvalues,
respectively. By the classical investigations (see formulas (47a), (47b) in
page 65 of the monograph [10]) the large eigenvalues of the operators
$L_{0}(q)$ and $L_{\pi}(q)$ consist of the sequences $\{\lambda_{n,j}%
:n=N,N+1,...\}$ and $\{\mu_{n,j}:n=N,N+1,...\}$\ satisfying
\begin{equation}
\lambda_{n,j}=(2n\pi)^{2}+O(n^{\frac{1}{2}}),\text{ } \tag{4}%
\end{equation}
and
\[
\mu_{n,j}=(2n\pi+\pi)^{2}+O(n^{\frac{1}{2}})
\]
for $j=1,2$, where $N$ is a sufficiently large positive integer. Thus, the
gaps in $\sigma(L(q))$ in the high energy region are equal to
\begin{equation}
\Delta_{n}=(\lambda_{n,1},\lambda_{n,2}),~\Omega_{n}=\left(  \mu_{n,1}%
,\mu_{n,2}\right)  \tag{5}%
\end{equation}
for $n=N,N+1,...,$ and without loss of generality it is assumed that
$\lambda_{n,1}\leq\lambda_{n,2}~$and $\mu_{n,1}\leq\mu_{n,2}.$ In this paper
to obtain a sharp and explicit formulas for the lengths of these gaps, we use
some formulas from [1] and [12]. For the independent reading of this paper,
first of all, we list the formulas of those papers that are essentially used here.

In [1] we obtained asymptotic formulas of arbitrary order for the eigenvalues
$\lambda_{n,j}$ and corresponding normalized eigenfunctions $\Psi_{n,j}(x)$ of
$L_{0}(q),$ for any $q\in L_{1}[0,1].$ For this we used (4) and the equality
\begin{equation}
(\lambda_{n,j}-(2\pi n)^{2})(\Psi_{n,j}(x),e^{i2\pi nx})=(q(x)\Psi
_{n,j}(x),e^{i2\pi nx})= \tag{6}%
\end{equation}%
\[
\sum_{\substack{n_{1}=-\infty,\\n_{1}\neq0}}^{\infty}q_{n_{1}}(\Psi
_{n,j},e^{i2\pi(n-n_{1})x}),
\]
where $(\cdot,\cdot)$ is the inner product in $L_{2}[0,1]$ and $q_{n}%
=(q(x),e^{i2\pi nx}).$ Iterating formula (6) we obtained
\begin{equation}
(\lambda_{n,j}-(2\pi n)^{2}-A_{m}(\lambda_{n,j}))(\Psi_{n,j}(x),e^{i2\pi nx})-
\tag{7}%
\end{equation}%
\[
(q_{2n}+B_{m}(\lambda_{n,j}))(\Psi_{n,j}(x),e^{-i2\pi nx})=R_{m}(\lambda
_{n,j}),
\]
where
\[
A_{m}(\lambda_{n,j})=\sum_{k=1}^{m}a_{k}(\lambda_{n,j}),\text{ }B_{m}%
(\lambda_{n,j})=\sum_{k=1}^{m}b_{k}(\lambda_{n,j}),
\]%
\[
a_{k}(\lambda_{n,j})=\sum_{n_{1},n_{2},...,n_{k}}\frac{q_{n_{1}}q_{n_{2}%
}...q_{n_{k}}q_{-n_{1}-n_{2}-...-n_{k}}}{\prod\limits_{s=1}^{k}(\lambda
_{n,j}-(2\pi(n-n_{1}-n_{2}-...-n_{s}))^{2})},
\]%
\[
b_{k}(\lambda_{n,j})=\sum_{n_{1},n_{2},...,n_{k}}\frac{q_{n_{1}}q_{n_{2}%
}...q_{n_{k}}q_{2n-n_{1}-n_{2}-...-n_{k}}}{\prod\limits_{s=1}^{k}%
(\lambda_{n,j}-(2\pi(n-n_{1}-n_{2}-...-n_{s}))^{2})},
\]%
\[
R_{m}(\lambda_{n,j})=\sum_{n_{1},n_{2},...,n_{m+1}}\frac{q_{n_{1}}q_{n_{2}%
}...q_{n_{m}}q_{n_{m+1}}(q(x)\Psi_{n,j}(x),e^{i2\pi(n-n_{1}-...-n_{m+1})x}%
)}{\prod\limits_{s=1}^{m+1}(\lambda_{n,j}-(2\pi(n-n_{1}-n_{2}-...-n_{s}%
))^{2})}.
\]
The summation in these formulas is taken over the indices satisfying the
conditions%
\begin{equation}
n_{s}\neq0,\text{ }n_{1}+n_{2}+...,n_{s}\neq0,2n \tag{8}%
\end{equation}
for $s=1,2,...,m+1.$

Moreover, in [1] it were proved that
\begin{equation}
a_{k}(\lambda_{n,j})=O((\frac{\ln\left\vert n\right\vert }{n})^{k}),\text{
}b_{k}(\lambda_{n,j})=O((\frac{\ln\left\vert n\right\vert }{n})^{k}) \tag{9}%
\end{equation}
for $k\geq1$ and
\begin{equation}
R_{m}(\lambda_{n,j})=O((\frac{\ln\left\vert n\right\vert }{n})^{m+1}).
\tag{10}%
\end{equation}
Using these estimations in (7) we obtained
\begin{equation}
(\lambda_{n,j}-(2\pi n)^{2})u_{n,j}=q_{2n}u_{-n,j}+O\left(  \frac
{\ln\left\vert n\right\vert }{n}\right)  , \tag{11}%
\end{equation}
where $u_{n,j}=(\Psi_{n,j}(x),e^{i2\pi nx})$. In the same way we have got
\begin{equation}
(\lambda_{n,j}-(2\pi n)^{2})u_{-n,j}=q_{-2n}u_{n,j}+O\left(  \frac
{\ln\left\vert n\right\vert }{n}\right)  . \tag{12}%
\end{equation}

Besides, we obtained the following obvious and well known formula (see (19)
and (20) of [1] or Theorem 2.3.2 in page 50 of [14])
\begin{equation}
\Psi_{n,j}(x)=u_{n,j}e^{i2\pi nx}+u_{-n,j}e^{-i2\pi nx}+h(x), \tag{13}%
\end{equation}
where $\left\Vert h(x)\right\Vert =O(n^{-1}),$ $(h,e^{\pm i2\pi nx})=0$ and
\[
\left\vert u_{n,j}\right\vert ^{2}+\left\vert u_{-n,j}\right\vert
^{2}=1+O(n^{-2}).
\]
From (11)-(13) it readily follows that the sequence $\{\lambda_{n,j}%
\}_{N}^{\infty}\ $ (see (4)) obey an asymptotic formula
\begin{equation}
\lambda_{n,j}=(2n\pi)^{2}+o(1) \tag{14}%
\end{equation}
as$\ \,n\rightarrow\infty$ for $j=1,2.$ (If $\left\vert u_{n,j}\right\vert
\geq\left\vert u_{-n,j}\right\vert $ then use (11), if $\left\vert
u_{-n,j}\right\vert \geq\left\vert u_{n,j}\right\vert $ then use (12)).
Similarly, for $\{\mu_{n,1}\}_{N}^{\infty}\ $ we have a formula
\[
\mu_{n,j}=(2n\pi+\pi)^{2}+o(1).
\]

Finally, note that we use the following equalities from [12]. Let $p\geq0$ be
an arbitrary integer,
\begin{equation}
q\in W_{1}^{p}[0,1],\,q^{(l)}(0)=q^{(l)}(1) \tag{15}%
\end{equation}
for all$\ \,0\leq l\leq s-1,$ with some $s\leq p,$ where $W_{1}^{p}[0,1]$ is
the Sobolev space and $W_{1}^{0}[0,1]=L_{1}[0,1]$. Define the functions
\[
Q(x)=\int_{0}^{x}q(t)\,dt,\quad S(x)=Q^{2}(x),
\]
and denote by $Q_{k}=(Q,\,e^{2\pi ikx}),\ \,S_{k}=(S,\,e^{2\pi ikx})$ the
Fourier coefficients of these functions. The following relations are valid:
\begin{equation}
b_{1}(\lambda_{n,j})=-S_{2n}+2Q_{0}Q_{2n}+o\left(  n^{-s-2}\right)
,\quad\tag{16}%
\end{equation}
and%
\begin{equation}
b_{2}(\lambda_{n,j})=o\left(  n^{-s-2}\right)  ,\text{ }b_{k}(\lambda
_{n})=o\left(  \frac{\ln^{k}\,n}{n^{k+s}}\right)  \tag{17}%
\end{equation}
for all$\ k=3,4,\dots$ (see Lemma 6 of [12]).

In this paper we obtain asymptotic formulas
\begin{equation}
\lambda_{n,j}=(2\pi n)^{2}+(-1)^{j}\left\vert q_{2n}\right\vert +o(n^{-1})
\tag{18}%
\end{equation}
and
\begin{equation}
\lambda_{n,j}=(2\pi n)^{2}+A_{2}((2\pi n)^{2})+(-1)^{j}\left\vert
q_{2n}-S_{2n}+2Q_{0}Q_{2n}\right\vert +o(n^{-2}) \tag{19}%
\end{equation}
for the sequence $\{\lambda_{n,j}\}_{N}^{\infty}.$ Moreover, from these
formulas we get the following formulas for the lengths $\left\vert \Delta
_{n}\right\vert $ of the gaps $\Delta_{n}$%
\begin{equation}
\left\vert \Delta_{n}\right\vert =2\left\vert q_{2n}\right\vert +o(n^{-1})
\tag{20}%
\end{equation}
and $\ $
\begin{equation}
\left\vert \Delta_{n}\right\vert =2\left\vert q_{2n}-S_{2n}+2Q_{0}%
Q_{2n}\right\vert +o(n^{-2}). \tag{21}%
\end{equation}
The similar formulas for for the sequence $\{\mu_{n,j}\}_{N}^{\infty}$ and
$\left\vert \Omega_{n}\right\vert $ are obtained. The superiority of formula
(20) over well-known Titchmarsh formula
\begin{equation}
\left\vert \Delta_{n}\right\vert =2\left\vert q_{2n}\right\vert +O(n^{-1})
\tag{22}%
\end{equation}
(see [13] Chap. 21, [7] Chap. 3, and their references) for any potential $q\in
L_{1}[0,1]$ can be explained as follows. Generally speaking, some Fourier
coefficient $q_{2n}$ of many functions from $L_{1}[0,1]$ can be of order
$n^{-1}$ in the sense that $q_{2n}=O(n^{-1})$ and $n^{-1}=O(q_{2n})$ when $n$
belong to some subset of $\mathbb{N}$ (for example if the potential $q$ has
some jump discontinuity). Then (20) with the error term $o(n^{-1})$ explicitly
separate the term $2\left\vert q_{2n}\right\vert $ from the error $o(n^{-1}),$
while (22) does not do this. In this sense $2\left\vert q_{2n}\right\vert $
can be seen as first term of the asymptotic formula for the gaps in
$\sigma(L(q))$ with the potential $q\in L_{1}[0,1].$ Similarly, (21) give us
the second term $2\left\vert q_{2n}-S_{2n}+2Q_{0}Q_{2n}\right\vert
-2\left\vert q_{2n}\right\vert $ of the asymptotic formula for the gaps in
$\sigma(L(q)).$

Then we obtain asymptotic formulas with the error term $O\left(  (\frac{\ln
n}{n})^{k}\right)  $ for the $n$th\ periodic and antiperiodic eigenvalues and
$n$th gaps in the spectrum of the operator $L(q)$ with $q\in L_{1}[0,1],$
where $k=3,4,...$. Many studies are devoted to these asymptotic formulas for
differentiable potentials (see [8, Chapter 1] and [7, Chapter 3] and their
references). Here we consider only the potential $q$ from $L_{1}[0,1]$.
Finally, we take the Kronig-Penney model as an example and illustrate formulas
(20) and (21) with this example.

\section{Main Results}

In this section we obtain asymptotic formulas for the eigenvalues and
eigenfunctions of the operators $L_{0}(q)$ and $L_{\pi}(q)$ and for the gaps
in $\sigma(L(q)).$ First, we prove the following obvious corollary of (14).

\begin{proposition}
If $q\in L_{1}[0,1],$ then the following equalities hold%
\[
a_{1}(\lambda_{n,j})=a_{1}((2\pi n)^{2})+o(n^{-2}),
\]%
\[
a_{2}(\lambda_{n,j})=a_{2}((2\pi n)^{2})+o(n^{-3}\ln n)
\]
and%
\[
b_{1}(\lambda_{n,j})=b_{1}((2\pi n)^{2})+o(n^{-2}),
\]
where $a_{1}((2\pi n)^{2}),$ $a_{2}((2\pi n)^{2})$ and $b_{1}((2\pi n)^{2})$
are obtained respectively from $a_{1}(\lambda_{n,j}),$ $a_{2}(\lambda_{n,j})$
and $b_{1}(\lambda_{n,j})$ by changing $\lambda_{n,j}$ to $(2\pi n)^{2}$ in
the corresponding formulas.
\end{proposition}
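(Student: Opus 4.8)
The plan is to treat all three identities as first-order perturbation estimates in which $\lambda_{n,j}$ is replaced by $(2\pi n)^{2}$ only inside the denominators, the driving smallness coming from (14). Set $\delta_{n}=\lambda_{n,j}-(2\pi n)^{2}$, so that $\delta_{n}=o(1)$, and write $D_{s}=(2\pi n)^{2}-(2\pi(n-n_{1}-\cdots-n_{s}))^{2}=(2\pi)^{2}\sigma_{s}(2n-\sigma_{s})$ with $\sigma_{s}=n_{1}+\cdots+n_{s}$ for the denominators evaluated at $(2\pi n)^{2}$; the denominator at $\lambda_{n,j}$ is then $D_{s}+\delta_{n}$. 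The constraints (8) force $\sigma_{s}\neq0,2n$, whence the uniform lower bound $|D_{s}|\geq(2\pi)^{2}(2n-1)\geq cn$. Since $|\delta_{n}|=o(1)$ while $|D_{s}|\to\infty$, for all large $n$ one has $|D_{s}+\delta_{n}|\geq\tfrac12|D_{s}|$ uniformly in the summation indices. Throughout I use only the boundedness $|q_{k}|\leq\|q\|_{1}$, which guarantees absolute convergence of every series in play (each $D_{s}$ grows).

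For $a_{1}$ and $b_{1}$ I would combine the two absolutely convergent series termwise and obtain
\[
a_{1}(\lambda_{n,j})-a_{1}((2\pi n)^{2})=-\delta_{n}\sum_{n_{1}\neq0,2n}\frac{q_{n_{1}}q_{-n_{1}}}{(D_{1}+\delta_{n})D_{1}},
\]
and the same identity for $b_{1}$ with $q_{-n_{1}}$ replaced by $q_{2n-n_{1}}$. Using $|q_{n_{1}}q_{-n_{1}}|\leq\|q\|_{1}^{2}$, the bound $|D_{1}+\delta_{n}|\geq\tfrac12|D_{1}|$ and $|D_{1}|=(2\pi)^{2}|n_{1}||2n-n_{1}|$, the remaining sum is controlled by a constant times $\sum_{n_{1}\neq0,2n}[\,|n_{1}||2n-n_{1}|\,]^{-2}$. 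The crucial elementary fact is that this arithmetic series is $O(n^{-2})$: splitting the range into $|n_{1}|\leq n$, $n<n_{1}<2n$, $n_{1}\geq2n+1$ and $n_{1}\leq-1$, in each region one factor is $\geq cn$ and the reciprocal square of the other is summable. Multiplying by $|\delta_{n}|=o(1)$ yields the asserted $o(n^{-2})$, and the $b_{1}$ case is identical because only $\|q\|_{1}$-boundedness of the coefficients enters.

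For $a_{2}$ I would expand the difference of the two products of denominators: the numerator of $\big[(D_{1}+\delta_{n})(D_{2}+\delta_{n})\big]^{-1}-(D_{1}D_{2})^{-1}$ equals $-\delta_{n}(D_{1}+D_{2})-\delta_{n}^{2}$, and the $\delta_{n}^{2}$ term is absorbed into the first since $|\delta_{n}|\leq|D_{1}|+|D_{2}|$ for large $n$. After applying $|D_{s}+\delta_{n}|\geq\tfrac12|D_{s}|$ and $|q_{n_{1}}q_{n_{2}}q_{-n_{1}-n_{2}}|\leq\|q\|_{1}^{3}$, each term is $\leq C|\delta_{n}|\big(d_{1}d_{2}^{2}\big)^{-1}+C|\delta_{n}|\big(d_{1}^{2}d_{2}\big)^{-1}$ with $d_{s}=|D_{s}|$. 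Changing variables from $(n_{1},n_{2})$ to $(n_{1},m)$, $m=n_{1}+n_{2}$, the summand factorizes because $d_{1}$ depends only on $n_{1}$ and $d_{2}$ only on $m$; relaxing the constraints to the product region gives
\[
\sum\frac{1}{d_{1}d_{2}^{2}}\leq C\Big(\sum_{n_{1}\neq0,2n}\frac{1}{|n_{1}||2n-n_{1}|}\Big)\Big(\sum_{m\neq0,2n}\frac{1}{(|m||2n-m|)^{2}}\Big),
\]
and symmetrically for $(d_{1}^{2}d_{2})^{-1}$. The second factor is $O(n^{-2})$ as above, while the first factor is $O(n^{-1}\ln n)$: the same regional split reduces it to harmonic sums $H_{O(n)}=O(\ln n)$ over a factor $\geq cn$ (the tail regions handled by the partial-fraction identity $\sum_{k\geq1}[(2n+k)k]^{-1}=\frac{1}{2n}H_{2n}$). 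Hence the double sum is $O(n^{-3}\ln n)$, and multiplying by $|\delta_{n}|=o(1)$ gives the claimed $o(n^{-3}\ln n)$.

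The main obstacle is that $q\in L_{1}[0,1]$ supplies only boundedness $|q_{k}|\leq\|q\|_{1}$ and not square-summability of the Fourier coefficients, so no Cauchy--Schwarz bound on $\sum|q_{n_{1}}|^{2}$ is available; every estimate must instead be carried by the convergence of the purely arithmetic denominator series. The decisive quantitative inputs are therefore the two elementary bounds $\sum_{n_{1}\neq0,2n}[\,|n_{1}||2n-n_{1}|\,]^{-2}=O(n^{-2})$ and $\sum_{n_{1}\neq0,2n}[\,|n_{1}||2n-n_{1}|\,]^{-1}=O(n^{-1}\ln n)$, together with the factorization of the $a_{2}$ double sum after the substitution $m=n_{1}+n_{2}$.
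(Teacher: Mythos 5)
Your proposal is correct and follows essentially the same route as the paper: both take the termwise difference of the two series, use $\lambda_{n,j}-(2\pi n)^{2}=o(1)$ together with the uniform boundedness $|q_k|\leq\|q\|_{1}$ to reduce everything to the arithmetic sums $\sum[\,|n_{1}||2n-n_{1}|\,]^{-2}=O(n^{-2})$ and (for $a_{2}$) the two-term splitting $\frac{o(1)}{d_{1}d_{2}^{2}}+\frac{o(1)}{d_{1}^{2}d_{2}}$. You merely make explicit the steps the paper treats as obvious (the lower bound on the perturbed denominators, the $O(n^{-1}\ln n)$ harmonic-sum estimate, and the factorization of the double sum after setting $m=n_{1}+n_{2}$), all of which check out.
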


\begin{proof}
From (14) it follows that
\[
\frac{q_{n_{1}}q_{-n_{1}}}{\lambda_{n,j}-(2\pi(n-n_{1}))^{2}}-\frac{q_{n_{1}%
}q_{-n_{1}}}{(2\pi n)^{2}-(2\pi(n-n_{1}))^{2}}=
\]%
\[
\frac{o(1)}{((2\pi n)^{2}-(2\pi(n-n_{1}))^{2})^{2}}=\frac{o(1)}{(n_{1}%
(2n-n_{1}))^{2}}.
\]
Now, using this and the obvious equality
\[
\sum_{\substack{n_{1}\neq0,2n}}^{\infty}\frac{1}{(n_{1}(2n-n_{1}))^{2}%
}=O(n^{-2})
\]
we obtain a proof of the first equality of the proposition. In a similar way
we obtain the proof of the third equality.

Now, let's prove the second equality. Using (14), it is easy to see that
\[
\frac{q_{n_{1}}q_{n_{2}}q_{-n_{1}-n_{2}}}{[\lambda_{n,j}-(2\pi(n-n_{1}%
))^{2}][\lambda_{n,j}-(2\pi(n-n_{1}-n_{2}))^{2}]}-
\]%
\[
\frac{q_{n_{1}}q_{n_{2}}q_{-n_{1}-n_{2}}}{[(2\pi n)^{2}-(2\pi(n-n_{1}%
))^{2}][(2\pi n)^{2}-(2\pi(n-n_{1}-n_{2}))^{2}]}=
\]%
\[
\frac{o(1)}{n_{1}(2n-n_{1})((n_{1}+n_{2})(2n-n_{1}-n_{2}))^{2}}+
\]%
\[
\frac{o(1)}{(n_{1}(2n-n_{1}))^{2}(n_{1}+n_{2})(2n-n_{1}-n_{2})},
\]
which provides a proof of the second equality.
\end{proof}

Now, let's estimate $a_{1}(\lambda_{n,j})$ in detail

\begin{lemma}
$(a)$ If $q\in L_{1}[0,1],$ then the following estimation holds%
\begin{equation}
a_{1}(\lambda_{n,j})=o(n^{-1}). \tag{23}%
\end{equation}

$(b)$ For each real-valued $q\in L_{1}[0,1]$ the expressions $a_{1}%
(\lambda_{n,j})$, $a_{2}(\lambda_{n,j}),...$ are the real numbers.
\end{lemma}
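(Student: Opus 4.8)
My plan for $(a)$ is to reduce to the sum with the constant argument $(2\pi n)^{2}$ and then to read that sum off as a Fourier coefficient of an $L_{1}$-function, so that Riemann--Lebesgue yields the $o$-improvement over the $O(\ln n/n)$ bound (9). By the first equality of the Proposition, $a_{1}(\lambda_{n,j})=a_{1}((2\pi n)^{2})+o(n^{-2})$, so it is enough to prove $a_{1}((2\pi n)^{2})=o(n^{-1})$. Since $q$ is real, $q_{-n_{1}}=\overline{q_{n_{1}}}$, hence $a_{1}((2\pi n)^{2})=\frac{1}{(2\pi)^{2}}\sum_{n_{1}\neq0,2n}\frac{|q_{n_{1}}|^{2}}{n_{1}(2n-n_{1})}$. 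I would split this by the partial fraction $\frac{n}{n_{1}(2n-n_{1})}=\frac12\big(\frac1{n_{1}}+\frac1{2n-n_{1}}\big)$, so that $n\,a_{1}((2\pi n)^{2})$ equals $\frac{1}{2(2\pi)^{2}}$ times the sum of $\Sigma_{1}=\sum_{n_{1}\neq0,2n}\frac{|q_{n_{1}}|^{2}}{n_{1}}$ and $W_{n}=\sum_{n_{1}\neq0,2n}\frac{|q_{n_{1}}|^{2}}{2n-n_{1}}$.

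The term $\Sigma_{1}$ is harmless: $|q_{n_{1}}|^{2}$ is even and $1/n_{1}$ is odd in $n_{1}$, so the symmetric partial sums cancel and only the discarded index $n_{1}=2n$ survives, giving $\Sigma_{1}=-|q_{2n}|^{2}/(2n)\to0$. The decisive term is $W_{n}$, and here I would use two structural facts. First, $(|q_{n_{1}}|^{2})_{n_{1}}$ is the Fourier-coefficient sequence of the autocorrelation $\phi=q*\check q$, with $\check q(x)=q(-x)$; since $q\in L_{1}[0,1]$ we have $\phi\in L_{1}[0,1]$, and $\phi_{0}=|q_{0}|^{2}=0$ by the normalization (2). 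Second, $(1/l)_{l\neq0}$ is the Fourier-coefficient sequence of the bounded sawtooth $\psi(x)=i\pi(1-2x)$, so $\psi\in L_{\infty}[0,1]$. After the substitution $l=2n-n_{1}$ the sum $W_{n}$ becomes $\sum_{l\neq0}\phi_{2n-l}\psi_{l}$ (the excluded index $l=2n$ only adds $\phi_{0}\psi_{2n}=0$), which is exactly the $2n$-th Fourier coefficient of the product $\phi\psi$. As $\phi\in L_{1}$ and $\psi\in L_{\infty}$, we have $\phi\psi\in L_{1}[0,1]$, and Riemann--Lebesgue gives $W_{n}=(\phi\psi)_{2n}\to0$. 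Together with $\Sigma_{1}\to0$ this yields $n\,a_{1}((2\pi n)^{2})\to0$, i.e. (23).

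The sum $W_{n}$ is exactly where the difficulty lies: estimating its terms near the resonance $n_{1}\approx 2n$ by $|q_{n_{1}}|\to0$ alone produces a bound of order $\varepsilon\ln n$, which does not vanish, so the logarithm in (9) cannot be removed by such crude means; recognizing $W_{n}$ as a Fourier coefficient is precisely what beats it. The one technical point I would be careful about is the coefficient-convolution identity $(\phi\psi)_{2n}=\sum_{l}\phi_{2n-l}\psi_{l}$, since the multiplier $(1/l)$ is only conditionally summable. I would justify it by writing $(\phi\psi)_{2n}=\int_{0}^{1}\phi\,\psi\,e^{-i2\pi(2n)x}\,dx$, replacing $\psi$ by the symmetric partial sums of its Fourier series---which are uniformly bounded for the sawtooth---and passing to the limit by dominated convergence against $|\phi|\in L_{1}$.

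For $(b)$ I would produce an involution of the index set of each $a_{k}$ that conjugates the summands. Writing $P_{s}=n_{1}+\dots+n_{s}$ (with $P_{0}=0$), the general term is $\big(\prod_{s=1}^{k}q_{P_{s}-P_{s-1}}\big)q_{-P_{k}}\big/\prod_{s=1}^{k}D(P_{s})$ with $D(p)=\lambda_{n,j}-(2\pi(n-p))^{2}$; as $\lambda_{n,j}$ is real each $D(P_{s})$ is real, and (8) amounts to $P_{1},\dots,P_{k}\notin\{0,2n\}$ together with $P_{s}\neq P_{s-1}$. I would apply the reversal $P_{s}\mapsto P_{k+1-s}$ of the partial-sum sequence. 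It is an involution of the admissible configurations that leaves the multiset $\{P_{1},\dots,P_{k}\}$---hence the real denominator---unchanged, and a short computation of the induced increments shows it turns the numerator $\prod_{s}q_{n_{s}}\,q_{-P_{k}}$ into $q_{P_{k}}\prod_{s}q_{-n_{s}}$, which by $q_{-m}=\overline{q_{m}}$ is its complex conjugate. Reindexing the sum by this bijection therefore gives $a_{k}=\overline{a_{k}}$, and the same argument applies to the $b_{k}$. This proves that $a_{1}(\lambda_{n,j}),a_{2}(\lambda_{n,j}),\dots$ are real.
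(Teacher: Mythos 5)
Your proposal is correct, and part $(a)$ takes a genuinely different route from the paper's. The paper also reduces to $a_{1}((2\pi n)^{2})$ via Proposition 1 and also recognizes the resulting sum as an oscillatory integral, but it keeps the two resonant factors $\frac{1}{2n-k}$ and $\frac{1}{2n+k}$ together: it introduces the $n$-dependent function $Q(x,n)=\int_{0}^{x}q(t)e^{i4\pi nt}dt-q_{-2n}x$, identifies the sum with $\int_{0}^{1}(Q(x,n)-Q_{0})^{2}e^{-i8\pi nx}dx$ (its formula (25)), and gains the factor $o(n^{-1})$ by one integration by parts combined with the uniform convergence $Q(x,n)\rightarrow0$. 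You instead peel off the explicit factor $\frac{1}{2n}$ by partial fractions and identify the remaining conditionally convergent sum $W_{n}$ as the $2n$-th Fourier coefficient of the fixed ($n$-independent) $L_{1}$ function $\phi\psi$, with $\phi$ the autocorrelation of $q$ and $\psi$ the sawtooth, so a single application of Riemann--Lebesgue finishes; this buys a cleaner object and pinpoints exactly where the logarithm in (9) is beaten, at the cost of having to justify the coefficient-convolution identity for the only conditionally summable multiplier $(1/l)$, which your bounded-partial-sums plus dominated-convergence argument does correctly. One half-line you should add: after the substitution $l=2n-n_{1}$ the truncation inherited from the absolutely convergent original sum is the window $|l-2n|\leq M$, not $|l|\leq M$; since the terms are $O(1/|l|)$ uniformly in $n$, the two truncations differ by $O(n/M)$ and give the same limit, but the shift of window should be mentioned. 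For part $(b)$ your partial-sum reversal is in substance the same involution the paper uses (complex conjugation, the replacement $n_{s}\mapsto-n_{s}$, and then the substitution $j_{1}=-n_{1}-\dots-n_{k}$, $j_{2}=n_{k},\dots$), merely packaged more transparently, and it is correct.
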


\begin{proof}
$(a)$ By Proposition 1 we need to prove that
\[
a_{1}((2\pi n)^{2})=o(n^{-1}).
\]
Since
\[
q_{-k}=\int\limits_{0}^{1}q(x)e^{i2\pi kx}dx=\overline{q_{k}},\text{
}\left\vert q_{k}\right\vert =\left\vert q_{-k}\right\vert ,q_{0}=0,
\]
grouping the terms with indices $k$ and $-k$ we obtain
\begin{equation}
a_{1}((2\pi n)^{2})=\frac{1}{4\pi^{2}}\sum_{\substack{k=-\infty\\k\neq
0,2n}}^{\infty}\frac{\left\vert q_{k}\right\vert ^{2}}{2nk-k^{2}}= \tag{24}%
\end{equation}%
\[
\frac{1}{4\pi^{2}}\sum_{k\in\mathbb{N},k\neq2n}\frac{\left\vert q_{k}%
\right\vert ^{2}}{2nk-k^{2}}-\frac{\left\vert q_{k}\right\vert ^{2}}%
{2nk+k^{2}}=
\]%
\[
=\frac{1}{2\pi^{2}}\sum_{k\in\mathbb{N},k\neq2n}\frac{\left\vert
q_{k}\right\vert ^{2}}{\left(  2n-k\right)  (2n+k)}.
\]
Now to prove (23) we show that the following two equalities hold
\begin{equation}
\frac{-1}{2\pi^{2}}\sum_{k\in\mathbb{N},k\neq2n}\frac{\left\vert
q_{k}\right\vert ^{2}}{\left(  2n-k\right)  (2n+k)}=\int_{0}^{1}%
(Q(x,n)-Q_{0})^{2}e^{-i8\pi nx}\,dx \tag{25}%
\end{equation}
and
\begin{equation}
\int_{0}^{1}(Q(x,n)-Q_{0})^{2}e^{-i8\pi nx}\,dx=o(n^{-1}), \tag{26}%
\end{equation}
where%
\[
Q(x,n)=\int_{0}^{x}q(t)e^{i4\pi nt}\,dt-q_{-2n}x.
\]
First let us prove (25) by calculation its right-hand side . It is clear that
\begin{equation}
Q(1,n)=Q(0,n)=0,\text{ }Q^{^{\prime}}(x,n)=q(x)e^{i4\pi nx}\,-q_{-2n}.
\tag{27}%
\end{equation}
Therefore, using the integrations by parts we see that the Fourier
coefficients $Q_{2n+k}=(Q(x,n),e^{i2\pi(2n+k)x})$ of $Q(x,n)$ for $2n+k\neq
0$\ is%
\[
Q_{2n+k}=\frac{q_{k}}{i2\pi(2n+k)}.
\]
Moreover, $q_{0}=0$ according to (2). Thus, using the Fourier decomposition
\[
Q(x,n)-Q_{0}=\sum_{\substack{k=-\infty\\k\neq0,-2n}}^{\infty}\frac
{q_{k}e^{i2\pi(2n+k)x}\,}{i2\pi(2n+k)}%
\]
of $Q(x,n)-Q_{0}$ in the right side of (25) we get its proof.

To prove (26), first we show that $Q(x,n)$ converges uniformly for
$x\in\lbrack0,1]$ to zero as $n\rightarrow\infty.$ Since $q\in L_{1}[0,1],$
for each $\varepsilon>0,$ there exists continuously differentiable function
$f_{\varepsilon}$ such that
\[
\int_{0}^{x}\left\vert q(t)-f_{\varepsilon}(t)\right\vert dt\leq\int_{0}%
^{1}\left\vert q(t)-f_{\varepsilon}(t)\right\vert dt<\varepsilon.
\]
On the other hand, using integration by parts we obtain that, there exists $N$
such that
\[
\left\vert \int_{0}^{x}f_{\varepsilon}(t)e^{i4\pi nt}\,dt\right\vert
\,<\varepsilon,
\]
for $x\in\lbrack0,1]$ and $n>N.$ From the last two inequalities it follows
that $Q(x,n)$ converges uniformly for $x\in\lbrack0,1]$ to zero as
$n\rightarrow\infty.$ Now, we are ready to prove (26). Using integration by
parts and (27) we obtain
\[
\int_{0}^{1}(Q(x,n)-Q_{0})^{2}e^{-i8\pi nx}\,dx=\frac{1}{^{i8\pi nt}}\int
_{0}^{1}2(Q(x,n)-Q_{0})(q(x)e^{i4\pi nx}\,-q_{-2n})e^{-i8\pi nx}\,dx=
\]%
\[
\frac{1}{^{i4\pi nt}}\int_{0}^{1}Q(x,n)(q(x)e^{i4\pi nx}\,-q_{-2n})e^{-i8\pi
nt}\,dt-
\]%
\[
\frac{Q_{0}(n)}{^{i4\pi nt}}\int_{0}^{1}(q(x)e^{i4\pi nx}\,-q_{-2n})e^{-i8\pi
nt}\,dt=o(n^{-1}).
\]
Thus (23) follows from (24)-(26).

$(b)$ Using the definition of $a_{k}(\lambda_{n,j})$, the obvious equality
$\overline{q_{n}}=q_{-n}$ and then replacing $n_{s}$ by $-n_{s}$we see that
\[
\overline{a_{k}(\lambda_{n,j})}=\sum_{n_{1},n_{2},...,n_{k}}\frac{q_{n_{1}%
}q_{n_{2}}...q_{n_{k}}q_{-n_{1}-n_{2}-...-n_{k}}}{\prod\limits_{s=1}%
^{k}(\lambda_{n,j}-(2\pi(n+n_{1}+n_{2}+...+n_{s}))^{2})},
\]
where the summation is taken over the indices $n_{1}+n_{2}+...,n_{s}\neq0,-2n$
and $n_{s}\neq0$ for $s=1,2,...,k.$ Now make the substitution
\[
-n_{1}-n_{2}-\dots-n_{k}=j_{1},\ \,n_{2}=j_{k},\ \,n_{3}=j_{k-1}%
,\,\dots,\,n_{k}=j_{2},
\]
in the formula for the expression $\overline{a_{k}(\lambda_{n,j})}.$ Then the
inequalities for the indices in the formula for $\overline{a_{k}(\lambda
_{n,j})}$ take the form $j_{s}\neq0,$ $j_{1}+j_{2}+...+j_{s}\neq0,2n$ and it
will coincide with the formula for $a_{k}(\lambda_{n,j})$ (see (7) and (8)).
The lemma is proved.
\end{proof}

Now let us discuss the eigenfunction $\Psi_{n,j}$ for the real potential.
Since $q(x)$ and $\lambda_{n,j}$ are the real numbers $\overline{\Psi_{n,j}}$
is also an eigenfunction corresponding to the eigenvalue $\lambda_{n,j}.$ Then
at least one of the functions $\Psi_{n,j}+\overline{\Psi_{n,j}}$ and
$i(\Psi_{n,j}-\overline{\Psi_{n,j}})$ is a real-valued eigenfunction.
Therefore, without loss of generality, assume that $\Psi_{n,j}$ is a real
-valued eigenfunction. Then
\[
u_{-n,j}=(\Psi_{n,j}(x),e^{-i2\pi nx})=\overline{(\Psi_{n,j}(x),e^{i2\pi nx}%
)}=\overline{u_{n,j}}.
\]
Using these notation in (13) we obtain that
\begin{equation}
\Psi_{n,j}(x)=u_{n,j}e^{i2\pi nx}+\overline{u_{n,j}}e^{-i2\pi nx}+h_{n,j}(x),
\tag{28}%
\end{equation}
where $(h_{n,j},e^{\pm i2\pi nx})=0,$ $\left\Vert h_{n,j}\right\Vert
=O(n^{-1})$ and%
\begin{equation}
\text{ }\left\vert u_{n,j}\right\vert =\frac{\sqrt{2}}{2}+O(n^{-2}). \tag{29}%
\end{equation}

Now, we are ready to obtain asymptotic formulas for the eigenvalues of the
operators $L_{0}(q)$ and $L_{\pi}(q)$\ for $q\in L_{1}(0,1).$ From (9), (10)
and (23) we obtain that (7) for $m=1$\ can be written in the form
\begin{equation}
(\lambda_{n,j}-(2\pi n)^{2})u_{n,j}=q_{2n}\overline{u_{n,j}}+\alpha_{n,j}
\tag{30}%
\end{equation}
where $\alpha_{n,j}=o(n^{-1}).$ Using these equalities we prove the following theorem.

\begin{theorem}
If $q\in L_{1}[0,1],$ then

$(a)$ The eigenvalues $\lambda_{n,1}$ and $\lambda_{n,2}$ satisfy the
equalities
\[
\lambda_{n,j}-(2\pi n)^{2}=a_{j}\left\vert q_{2n}\right\vert +O(\alpha_{n,j})
\]
for $j=1,2$ as $n\rightarrow\infty,$ where $a_{j}$ is either $1$ or $-1$.

$(b)$ If
\begin{equation}
\left\vert q_{2n}\right\vert \gg\left\vert \alpha_{n,1}\right\vert +\left\vert
\alpha_{n,2}\right\vert \tag{31}%
\end{equation}
for $n\geq N,$ then the eigenvalues \ $\lambda_{n,j}$ for $j=1,2$\ and $n\geq
N$ are simple and satisfy the asymptotic formulas
\begin{equation}
\lambda_{n,j}=(2\pi n)^{2}+(-1)^{j}\left\vert q_{2n}\right\vert +o\left(
\frac{1}{n}\right)  \tag{32}%
\end{equation}
as $n\rightarrow\infty.$
\end{theorem}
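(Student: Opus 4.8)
The relation (30) together with its complex conjugate forms, to leading order, a two--dimensional eigenvalue problem for the vector $(u_{n,j},\overline{u_{n,j}})$ whose off--diagonal coefficients are $q_{2n}$ and $q_{-2n}$ and whose eigenvalues are $\pm|q_{2n}|$; the whole argument consists in making this heuristic rigorous and in deciding which of the two signs is attached to $\lambda_{n,1}$ and which to $\lambda_{n,2}$. For part $(a)$ I would argue directly from (30). Writing $\gamma_{n,j}=\lambda_{n,j}-(2\pi n)^2$ and taking moduli in $\gamma_{n,j}u_{n,j}=q_{2n}\overline{u_{n,j}}+\alpha_{n,j}$, the reverse triangle inequality gives $\big|\,|\gamma_{n,j}|-|q_{2n}|\,\big|\,|u_{n,j}|\le|\alpha_{n,j}|$. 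Since by (29) $|u_{n,j}|=\tfrac{\sqrt2}{2}+O(n^{-2})$ is bounded away from $0$, division yields $\big|\,|\gamma_{n,j}|-|q_{2n}|\,\big|=O(\alpha_{n,j})$, and multiplying by $a_j:=\mathrm{sgn}\,\gamma_{n,j}\in\{1,-1\}$ (either value if $\gamma_{n,j}=0$) produces $\gamma_{n,j}=a_j|q_{2n}|+O(\alpha_{n,j})$, which is $(a)$. At this stage the sign $a_j$ is still undetermined.

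For part $(b)$ hypothesis (31) upgrades the error to $O(\alpha_{n,j})=o(|q_{2n}|)$, so $\gamma_{n,j}=a_j|q_{2n}|(1+o(1))$ with $|q_{2n}|>0$. To determine the signs I would read the phase off (30): dividing by $u_{n,j}=|u_{n,j}|e^{i\theta_{n,j}}$ and multiplying by $e^{2i\theta_{n,j}}$ gives $\gamma_{n,j}e^{2i\theta_{n,j}}=q_{2n}+O(\alpha_{n,j})$, so, writing $q_{2n}=|q_{2n}|e^{i\phi_n}$ and using (31), $e^{2i\theta_{n,j}}=a_je^{i\phi_n}(1+o(1))$. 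Hence $a_j=1$ forces $2\theta_{n,j}\equiv\phi_n$ and $a_j=-1$ forces $2\theta_{n,j}\equiv\phi_n+\pi$ modulo $2\pi$ (up to $o(1)$); the two admissible phase classes for $\theta_{n,j}$ thus differ by $\pi/2$.

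The decisive step is orthogonality. I would choose the eigenfunctions $\Psi_{n,1},\Psi_{n,2}$ of the cluster $\{\lambda_{n,1},\lambda_{n,2}\}$ to be real and orthonormal (automatic when the eigenvalues differ, and otherwise obtained by picking an orthonormal basis of the eigenspace) and substitute (28) into $(\Psi_{n,1},\Psi_{n,2})=0$. Because $(h_{n,j},e^{\pm i2\pi nx})=0$ and $\|h_{n,j}\|=O(n^{-1})$, this reduces to $2\,\mathrm{Re}(u_{n,1}\overline{u_{n,2}})=O(n^{-2})$, and then (29) gives $\cos(\theta_{n,1}-\theta_{n,2})=O(n^{-2})$, i.e. $\theta_{n,1}-\theta_{n,2}\equiv\pi/2$ modulo $\pi$. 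This is incompatible with $a_1=a_2$ (which would force $\theta_{n,1}-\theta_{n,2}\equiv0$ modulo $\pi$), so $a_1\neq a_2$. Combined with the ordering $\lambda_{n,1}\le\lambda_{n,2}$ and $|q_{2n}|>0$ this forces $a_1=-1,\ a_2=1$, i.e. $a_j=(-1)^j$, and recalling $\alpha_{n,j}=o(n^{-1})$ gives exactly (32); finally $\gamma_{n,2}-\gamma_{n,1}=2|q_{2n}|(1+o(1))>0$ shows $\lambda_{n,1}\neq\lambda_{n,2}$, so both eigenvalues are simple. I expect the coupling in this last paragraph to be the crux: converting the undetermined sign $a_j$ into a constraint on the phase $\theta_{n,j}$ and then using orthogonality to forbid $a_1=a_2$ (equivalently, to exclude a double eigenvalue), a step in which (31) is essential to keep the $o(1)$ phase errors strictly smaller than the $\pi/2$ gap.
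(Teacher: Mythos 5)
Your proposal is correct and follows essentially the same route as the paper: part $(a)$ is the identical reverse-triangle-inequality argument on (30) using (29), and part $(b)$ rules out $a_1=a_2$ by combining (30), the normalization (29), and the orthogonality relation $(\Psi_{n,1},\Psi_{n,2})=0$ via (28), exactly the three ingredients the paper uses. The only difference is presentational: you extract the phase of each $u_{n,j}$ separately and contradict $\cos(\theta_{n,1}-\theta_{n,2})=O(n^{-2})$, whereas the paper forms the bilinear combination $u_{n,1}\overline{u_{n,2}}-u_{n,2}\overline{u_{n,1}}$ from the two copies of (30) and contradicts $|u_{n,1}\overline{u_{n,2}}|=\tfrac12+O(n^{-2})$ --- the same computation in different clothing.
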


\begin{proof}
$(a)$ Dividing both sides of (30) by $u_{n,j}$ and using (29) we obtain%
\[
(\lambda_{n,j}-(2\pi n)^{2})-q_{2n}\overline{u_{n,j}}\frac{1}{u_{n,j}}%
=\frac{\alpha_{n,j}}{u_{n,j}}=O\left(  \alpha_{n,j}\right)  .
\]
This equality with the equality $\left\vert \overline{u_{n,j}}\frac{1}%
{u_{n,j}}\right\vert =1$ gives
\[
\left\vert \left\vert (\lambda_{n,j}-(2\pi n)^{2})\right\vert -\left\vert
q_{2n}\right\vert \right\vert =\left\vert \left\vert (\lambda_{n,j}-(2\pi
n)^{2})\right\vert -\left\vert q_{2n}\overline{u_{n,j}}\frac{1}{u_{n,j}%
}\right\vert \right\vert \leq
\]%
\[
\leq\left\vert (\lambda_{n,j}-(2\pi n)^{2})-q_{2n}\overline{u_{n,j}}\frac
{1}{u_{n,j}}\right\vert =O(\alpha_{n,j})
\]
which implies the proof of $(a)$, since $\lambda_{n,j}-(2\pi n)^{2}$ is a real number.

$(b)$ First we prove that if (31) hold, then $a_{1}\neq a_{2}.$ Suppose
$a_{1}=a_{2}$ $=-1$. Then, using $(a)$ in (30) for $j=1$ and $j=2,$ we obtain
\[
-\left\vert q_{2n}\right\vert u_{n,1}=q_{2n}\overline{u_{n,1}}+O(\alpha
_{n,1})
\]
and
\[
-\left\vert q_{2n}\right\vert u_{n,2}=q_{2n}\overline{u_{n,2}}+O(\alpha
_{n,2}).
\]
Multiplying both sides of the first and second equalities by $-\overline
{u_{n,2}}$ and $\overline{u_{n,1}}$ respectively and then summing these
equalities we get
\[
(u_{n,1}\overline{u_{n,2}}-u_{n,2}\overline{u_{n,1}})\left\vert q_{2n}%
\right\vert =-O(\alpha_{n,1})\overline{u_{n,2}}+O(\alpha_{n,2})\overline
{u_{n,1}}.
\]
Now dividing both sides of this equality by $\left\vert q_{2n}\right\vert $
and using (31) and (29) we obtain
\begin{equation}
\left\vert u_{n,1}\overline{u_{n,2}}-u_{n,2}\overline{u_{n,1}}\right\vert
<\frac{3}{4}. \tag{33}%
\end{equation}
On the other hand, if $\lambda_{n,1}\neq\lambda_{n,2}$ ,\ then the
corresponding eigenfunctions $\Psi_{n,1}$ and $\Psi_{n,2}$ are orthogonal. If
$\lambda_{n,1}=\lambda_{n,2}$ ,\ then $\Psi_{n,1}$ and $\Psi_{n,2}$ can be
chosen to be orthogonal. \ Therefore, using (28) we obtain
\begin{equation}
0=(\Psi_{n,1},\Psi_{n,2})=u_{n,1}\overline{u_{n,2}}+u_{n,2}\overline{u_{n,1}%
}+O(n^{-2}). \tag{34}%
\end{equation}
Now, from (33) and (34) it follows that $\left\vert 2u_{n,1}\overline{u_{n,2}%
}\right\vert <\frac{4}{5}$ which contradicts (29).

If we suppose that $a_{1}=a_{2}$ $=1$ then, in the same way, we obtain the
same contradiction. Thus one of these numbers is $1$ and the other is $-1$.
Since we assumed that $\lambda_{n,1}\leq\lambda_{n,2}$ (see (5)) we have
$a_{1}=-1$ and $a_{2}=1$ due to $(a)$ and (31). Thus, (32) is proved.
\end{proof}

Let us now prove that in (31), a much larger ratio can be replaced by twice as large.

\begin{theorem}
If $q\in L_{1}[0,1]$ and
\begin{equation}
\left\vert q_{2n}\right\vert \geq2(\left\vert \alpha_{n,1}\right\vert
+\left\vert \alpha_{n,2}\right\vert ), \tag{35}%
\end{equation}
then the eigenvalues \ $\lambda_{n,j}$ for $j=1,2$\ and $n\geq N$ are simple
and satisfy the asymptotic formulas (32).
\end{theorem}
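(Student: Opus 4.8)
The plan is to re-run the contradiction argument from the proof of Theorem 1$(b)$ essentially verbatim, the only new ingredient being a careful tracking of absolute constants so that the weaker hypothesis (35) still drives the key quantity strictly below the threshold $3/4$ appearing in (33). Note first that part $(a)$ of Theorem 1, and the relation (30) on which it rests, hold for every $q\in L_{1}[0,1]$ with no smallness assumption; part $(a)$ already provides $\lambda_{n,j}-(2\pi n)^{2}=a_{j}\,|q_{2n}|+O(\alpha_{n,j})$ with $a_{j}\in\{-1,1\}$, and, since $|1/u_{n,j}|=\sqrt2+O(n^{-2})$ by (29), it provides the sharp form $\lambda_{n,j}-(2\pi n)^{2}=a_{j}|q_{2n}|+\theta_{n,j}$ with $|\theta_{n,j}|\le\sqrt2\,|\alpha_{n,j}|(1+o(1))$. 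Hence the only thing I must re-prove is that $a_{1}\neq a_{2}$: once that is known, $\lambda_{n,1}\le\lambda_{n,2}$ forces $a_{1}=-1,\ a_{2}=1$, which is exactly (32), while the resulting separation $\lambda_{n,2}-\lambda_{n,1}=2|q_{2n}|+O(|\alpha_{n,1}|+|\alpha_{n,2}|)>0$ gives simplicity.

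To establish $a_{1}\neq a_{2}$ I would argue by contradiction, supposing $a_{1}=a_{2}=-1$ (the case $a_{1}=a_{2}=1$ being symmetric). Substituting $\lambda_{n,j}-(2\pi n)^{2}=-|q_{2n}|+\theta_{n,j}$ into (30) rewrites the two eigenvalue relations as $-|q_{2n}|u_{n,j}=q_{2n}\overline{u_{n,j}}+\beta_{n,j}$, where $\beta_{n,j}=\alpha_{n,j}-\theta_{n,j}u_{n,j}$; by (29) and the bound on $\theta_{n,j}$ this gives $|\beta_{n,j}|\le 2|\alpha_{n,j}|(1+o(1))$. Multiplying the $j=1$ relation by $-\overline{u_{n,2}}$ and the $j=2$ relation by $\overline{u_{n,1}}$ and summing, the $q_{2n}$ terms cancel, leaving $|q_{2n}|(u_{n,1}\overline{u_{n,2}}-u_{n,2}\overline{u_{n,1}})=-\beta_{n,1}\overline{u_{n,2}}+\beta_{n,2}\overline{u_{n,1}}$, exactly as in the derivation of (33).

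The decisive step is the quantitative estimate of the right-hand side. Writing $S=|\alpha_{n,1}|+|\alpha_{n,2}|$ and using $|u_{n,j}|=\tfrac{\sqrt2}{2}+O(n^{-2})$ together with $|\beta_{n,j}|\le 2|\alpha_{n,j}|(1+o(1))$, I would obtain $|q_{2n}|\,|u_{n,1}\overline{u_{n,2}}-u_{n,2}\overline{u_{n,1}}|\le\sqrt2\,S\,(1+o(1))$, whence, by (35) (which yields $S/|q_{2n}|\le\tfrac12$), $|u_{n,1}\overline{u_{n,2}}-u_{n,2}\overline{u_{n,1}}|\le\tfrac{\sqrt2}{2}(1+o(1))<\tfrac34$ for all large $n$. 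This is precisely (33), so from here the proof of Theorem 1$(b)$ applies word for word: (34) forces the real part of $u_{n,1}\overline{u_{n,2}}$ to be $O(n^{-2})$, giving $|2u_{n,1}\overline{u_{n,2}}|<\tfrac45$, in contradiction with (29), which says $|2u_{n,1}\overline{u_{n,2}}|=1+O(n^{-2})$.

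I expect the only genuinely delicate point to be this constant bookkeeping: one has to check that the two factors $\sqrt2$ (coming from $|1/u_{n,j}|$ in part $(a)$ and from $|u_{n,j}|$ in (29)) combine with the factor $\tfrac12$ supplied by (35) to land strictly below the threshold $\tfrac34$ of (33). Since $\tfrac{\sqrt2}{2}\approx0.707<0.75$ the margin is comfortable; indeed the same computation shows that any constant larger than $\sqrt2$ could replace the $2$ in (35), which is the precise reason the ``much larger'' ratio of (31) can be lowered all the way to the factor $2$.
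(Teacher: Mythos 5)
Your proposal is correct and follows essentially the same route as the paper: the same substitution of $\lambda_{n,j}-(2\pi n)^{2}=a_{j}|q_{2n}|+\theta_{n,j}$ with $|\theta_{n,j}|\le|\alpha_{n,j}/u_{n,j}|$ into (30), the same cancellation yielding (40)/(33), and the same constant bookkeeping $\sqrt{2}\cdot\tfrac12=\tfrac{\sqrt{2}}{2}<\tfrac34$ leading to the contradiction with (34) and (29). The only cosmetic difference is that you fold the two error terms into a single $\beta_{n,j}$ with $|\beta_{n,j}|\le 2|\alpha_{n,j}|(1+o(1))$ and compress the paper's explicit Case 3 ($a_{1}=1$, $a_{2}=-1$) into the remark that the separation $2|q_{2n}|+O(|\alpha_{n,1}|+|\alpha_{n,2}|)>0$ is incompatible with $\lambda_{n,1}\le\lambda_{n,2}$, which is exactly the paper's argument.
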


\begin{proof}
$(a)$ It readily follows from the proof of Theorem 1$(a)$ that
\begin{equation}
\lambda_{n,j}-(2\pi n)^{2}=a_{j}(\left\vert q_{2n}\right\vert +\beta_{n,j}),
\tag{36}%
\end{equation}
where
\begin{equation}
-\left\vert \frac{\alpha_{n,j}}{u_{n,j}}\right\vert \leq\beta_{n,j}%
\leq\left\vert \frac{\alpha_{n,j}}{u_{n,j}}\right\vert . \tag{37}%
\end{equation}

Now, we prove that $a_{1}=-1$ and $a_{2}=1.$ To do this, we prove that the
remaining cases, namely the cases Case 1: $a_{1}=a_{2}=1,$ Case 2:
$a_{1}=a_{2}=-1$ and Case 3: $a_{1}=1,a_{2}=-1$ contradict condition (35).
Let's first consider the Case 1. Using (36) in (30) for $j=1$ and $j=2$ we
obtain
\begin{equation}
(\left\vert q_{2n}\right\vert +\beta_{n,1})u_{n,1}=q_{2n}\overline{u_{n,1}%
}+\alpha_{n,1} \tag{38}%
\end{equation}
and
\begin{equation}
(\left\vert q_{2n}\right\vert +\beta_{n,2})u_{n,2}=q_{2n}\overline{u_{n,2}%
}+\alpha_{n,2} \tag{39}%
\end{equation}
Multiplying both sides of (38) and (39) by $\overline{u_{n,2}}$ and
$-\overline{u_{n,1}}$ respectively and then summing these equalities we get
\begin{equation}
(u_{n,1}\overline{u_{n,2}}-u_{n,2}\overline{u_{n,1}})\left\vert q_{2n}%
\right\vert =\beta_{n,2}u_{n,2}\overline{u_{n,1}}-\beta_{n,1}u_{n,1}%
\overline{u_{n,2}}+\alpha_{n,1}\overline{u_{n,2}}-\alpha_{n,2}\overline
{u_{n,1}}, \tag{40}%
\end{equation}
where
\[
\left\vert \beta_{n,2}u_{n,2}\overline{u_{n,1}}-\beta_{n,1}u_{n,1}%
\overline{u_{n,2}}\right\vert \leq\left\vert \alpha_{n,2}\overline{u_{n,1}%
}\right\vert +\left\vert \alpha_{n,1}\overline{u_{n,2}}\right\vert
\]
according (37). Now, dividing both sides of (40) by $\left\vert q_{2n}%
\right\vert $ and using (35) we obtain the proof of (33) which contradicts
(29) due to (34). In the same way we get the same contradiction for the Case 2.

It remains to consider the Case 3. In this case from (36) and (37) it follows
that
\[
\lambda_{n,1}-\lambda_{n,2}=2\left\vert q_{2n}\right\vert +\beta_{n,1}%
+\beta_{n,2}\geq2\left\vert q_{2n}\right\vert -\left\vert \frac{\alpha_{n,2}%
}{u_{n,2}}\right\vert -\left\vert \frac{\alpha_{n,1}}{u_{n,1}}\right\vert .
\]
Therefore using (35) and (29), we obtain $\lambda_{n,1}-\lambda_{n,2}>0$ which
contradicts to the assumption $\lambda_{n,1}\leq\lambda_{n,2}$ (see (5)). The
theorem is proved.
\end{proof}

Now, replacing condition (35) with more verifiable and applicable conditions,
we obtain the following elegant corollary of Theorems 1 and 2.

\begin{corollary}
$(a)$ If
\begin{equation}
q_{2n}=o\left(  n^{-1}\right)  , \tag{41}%
\end{equation}
then the eigenvalues \ $\lambda_{n,j}$ for $j=1,2$\ satisfy the asymptotic
formulas
\begin{equation}
\lambda_{n,j}=(2\pi n)^{2}+o\left(  n^{-1}\right)  . \tag{42}%
\end{equation}

$(b)$ Let $\mathbb{N}_{1}(\varepsilon)$ be the subset of the set $\left\{
n\in\mathbb{N}:\text{ }n>N\right\}  $ such that
\begin{equation}
\left\vert q_{2n}\right\vert \geq\frac{\varepsilon}{n}, \tag{43}%
\end{equation}
for $n\in\mathbb{N}_{1}(\varepsilon),$ where $\varepsilon$ is a fixed positive
number and $N$ is a sufficiently large positive integer. Then the eigenvalues
\ $\lambda_{n,j}$ for $n\in\mathbb{N}_{1}(\varepsilon)$ and $j=1,2$\ are
simple and satisfy asymptotic formulas (32).
\end{corollary}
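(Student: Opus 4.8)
The plan is to derive both parts directly from the results already established, since this corollary is essentially a matter of comparing the size of $q_{2n}$ against the error term $\alpha_{n,j}$, which is known to be $o(n^{-1})$ by the remark preceding Theorem 1 (this estimate rests ultimately on Lemma 1$(a)$).

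For part $(a)$, I would invoke Theorem 1$(a)$, which asserts
\[
\lambda_{n,j}-(2\pi n)^{2}=a_{j}\left\vert q_{2n}\right\vert +O(\alpha_{n,j}),\qquad a_{j}\in\{-1,1\}.
\]
Under hypothesis (41) we have $\left\vert q_{2n}\right\vert =o(n^{-1})$, and since $\alpha_{n,j}=o(n^{-1})$ as well, the entire right-hand side is $o(n^{-1})$. This gives (42) at once. I expect no obstacle here; the only point to check is that Theorem 1$(a)$ requires no lower bound on $\left\vert q_{2n}\right\vert$, so it applies for every large $n$ regardless of (41).

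For part $(b)$, the strategy is to show that the concrete lower bound (43) forces the hypothesis (35) of Theorem 2 to hold for all sufficiently large $n$, after which Theorem 2 delivers both the simplicity of the eigenvalues and the asymptotic formula (32). Concretely, because $\alpha_{n,1}=o(n^{-1})$ and $\alpha_{n,2}=o(n^{-1})$, we have $n\bigl(\left\vert \alpha_{n,1}\right\vert +\left\vert \alpha_{n,2}\right\vert \bigr)\to 0$ as $n\to\infty$. Hence, for the fixed $\varepsilon>0$ in the statement, there exists a threshold $N$ such that $2n\bigl(\left\vert \alpha_{n,1}\right\vert +\left\vert \alpha_{n,2}\right\vert \bigr)<\varepsilon$ for all $n>N$. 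Combining this with (43) yields
\[
2\bigl(\left\vert \alpha_{n,1}\right\vert +\left\vert \alpha_{n,2}\right\vert \bigr)<\frac{\varepsilon}{n}\leq\left\vert q_{2n}\right\vert
\]
for $n\in\mathbb{N}_{1}(\varepsilon)$ with $n>N$, which is precisely condition (35). Theorem 2 then applies verbatim.

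The only subtlety, and the step I would treat most carefully, is the uniformity of the threshold $N$: since $\varepsilon$ is fixed in advance, the convergence $n(\left\vert \alpha_{n,1}\right\vert +\left\vert \alpha_{n,2}\right\vert )\to 0$ guarantees a single $N$ that works simultaneously for all $n\in\mathbb{N}_{1}(\varepsilon)$ exceeding it, so no circularity arises between the choice of $N$ and the subset $\mathbb{N}_{1}(\varepsilon)$. Beyond this bookkeeping the argument is immediate, which is why the result is stated as an elegant corollary rather than an independent theorem.
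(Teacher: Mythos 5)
Your proposal is correct and follows essentially the same route as the paper: part $(a)$ is read off directly from Theorem 1$(a)$ using $\alpha_{n,j}=o(n^{-1})$, and part $(b)$ is obtained by checking that (43) together with $\left\vert \alpha_{n,1}\right\vert +\left\vert \alpha_{n,2}\right\vert =o(n^{-1})$ forces condition (35), so that Theorem 2 applies. Your extra remarks on the uniform choice of the threshold $N$ merely make explicit what the paper leaves implicit.
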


\begin{proof}
Proof $(a)$ follows directly from Theorem 1$(a)$. If (43) is satisfied, then
(35) is true, since
\[
\left\vert \alpha_{n,1}\right\vert +\left\vert \alpha_{n,2}\right\vert
=o\left(  n^{-1}\right)
\]
(see (30)). Therefore, proof $(b)$ follows from Theorem 2.
\end{proof}

Instead (6) using the equality
\[
(\mu_{n,j}-(2\pi n+\pi)^{2})(\Phi_{n,j}(x),e^{i\pi(2n+1)x})=(q(x)\Phi
_{n,j}(x),e^{i\pi(2n+1)x}),
\]
where $\Phi_{n,j}(x)$ is normalized eigenfunction of $L_{\pi}(q)$
corresponding to the eigenvalue $\mu_{n,j},$ and arguing as in the proof of
(30) we obtain
\[
(\mu_{n,j}-(2\pi n+\pi)^{2})v_{n,j}=q_{2n+1}\overline{v_{n,j}}+\widetilde
{\alpha}_{n,j}%
\]
where
\[
v_{n,j}=(\Phi_{n,j}(x),e^{i\pi(2n+1)x}),\text{ }\widetilde{\alpha}%
_{n,j}=o(n^{-1}).
\]
Using these formulas instead of (30) and reasoning in the same way as in the
proof of Theorems 1 and 2 and Corollary 1 we obtain the following theorem and corollary

\begin{theorem}
If $q\in L_{1}[0,1],$ then

$(a)$ The eigenvalues $\mu_{n,1}$ and $\mu_{n,2}$ satisfy the equalities
\[
\mu_{n,j}-(2\pi n+\pi)^{2}=a_{j}\left\vert q_{2n+1}\right\vert +o(n^{-1})
\]
for $j=1,2$ as $n\rightarrow\infty,$ where $a_{j}$ is either $1$ or $-1$.

$(b)$ If
\[
\left\vert q_{2n+1}\right\vert \geq2\left(  \left\vert \widetilde{\alpha
}_{n,1}\right\vert +\left\vert \widetilde{\alpha}_{n,2}\right\vert \right)
\]
for $n\geq N,$ then the eigenvalues \ $\mu_{n,j}$ for $j=1,2$\ and $n\geq N$
are simple and satisfy the asymptotic formulas
\begin{equation}
\mu_{n,j}=(2\pi n+\pi)^{2}+(-1)^{j}\left\vert q_{2n+1}\right\vert +o\left(
n^{-1}\right)  \tag{44}%
\end{equation}
as $n\rightarrow\infty.$
\end{theorem}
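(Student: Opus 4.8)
The plan is to mirror the proofs of Theorems 1 and 2 verbatim, using the dictionary $\lambda_{n,j}\mapsto\mu_{n,j}$, $(2\pi n)^{2}\mapsto(2\pi n+\pi)^{2}$, $q_{2n}\mapsto q_{2n+1}$, $u_{n,j}\mapsto v_{n,j}$, $\alpha_{n,j}\mapsto\widetilde{\alpha}_{n,j}$ and $\Psi_{n,j}\mapsto\Phi_{n,j}$, starting from the displayed antiperiodic analogue of (30) that precedes the statement. Before doing so I would record the two structural facts about $\Phi_{n,j}$ that the periodic argument relied on. First, since $q$ and $\mu_{n,j}$ are real, $\overline{\Phi_{n,j}}$ is again an eigenfunction for $\mu_{n,j}$, so I may take $\Phi_{n,j}$ real-valued; combined with the antiperiodic analogue of decomposition (13) from [1] (the modes nearest $(2\pi n+\pi)^{2}=\pi^{2}(2n+1)^{2}$ being $e^{\pm i\pi(2n+1)x}$) this gives
\[
\Phi_{n,j}(x)=v_{n,j}e^{i\pi(2n+1)x}+\overline{v_{n,j}}e^{-i\pi(2n+1)x}+\widetilde{h}_{n,j}(x),\quad |v_{n,j}|=\tfrac{\sqrt2}{2}+O(n^{-2}),
\]
the exact analogue of (28)--(29). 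Second, computing $(\Phi_{n,1},\Phi_{n,2})$ from this decomposition yields the orthogonality relation $0=v_{n,1}\overline{v_{n,2}}+v_{n,2}\overline{v_{n,1}}+O(n^{-2})$, the analogue of (34).

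For part $(a)$ I would divide the antiperiodic analogue of (30) by $v_{n,j}$, which is admissible because $|v_{n,j}|=\tfrac{\sqrt2}{2}+O(n^{-2})$ is bounded away from $0$, obtaining
\[
(\mu_{n,j}-(2\pi n+\pi)^{2})-q_{2n+1}\frac{\overline{v_{n,j}}}{v_{n,j}}=O(\widetilde{\alpha}_{n,j})=o(n^{-1}).
\]
Since $|\overline{v_{n,j}}/v_{n,j}|=1$ and $\mu_{n,j}-(2\pi n+\pi)^{2}$ is real, the reverse triangle inequality gives $\big||\mu_{n,j}-(2\pi n+\pi)^{2}|-|q_{2n+1}|\big|=o(n^{-1})$, that is $\mu_{n,j}-(2\pi n+\pi)^{2}=a_{j}|q_{2n+1}|+o(n^{-1})$ with $a_{j}\in\{1,-1\}$.

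For part $(b)$ I would follow the three-case analysis of Theorem 2. Writing $\mu_{n,j}-(2\pi n+\pi)^{2}=a_{j}(|q_{2n+1}|+\widetilde{\beta}_{n,j})$ with $|\widetilde{\beta}_{n,j}|\le|\widetilde{\alpha}_{n,j}/v_{n,j}|$, the cases $a_{1}=a_{2}=1$ and $a_{1}=a_{2}=-1$ each produce, after multiplying the two analogues of (38)--(39) by $\overline{v_{n,2}}$ and $-\overline{v_{n,1}}$ and adding, the estimate $|v_{n,1}\overline{v_{n,2}}-v_{n,2}\overline{v_{n,1}}|<\tfrac34$, which together with the orthogonality relation contradicts $|v_{n,j}|=\tfrac{\sqrt2}{2}+O(n^{-2})$; here the hypothesis $|q_{2n+1}|\ge2(|\widetilde{\alpha}_{n,1}|+|\widetilde{\alpha}_{n,2}|)$ is exactly what makes the right-hand side small after dividing by $|q_{2n+1}|$. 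The case $a_{1}=1,a_{2}=-1$ forces $\mu_{n,1}-\mu_{n,2}=2|q_{2n+1}|+\widetilde{\beta}_{n,1}+\widetilde{\beta}_{n,2}>0$, contradicting $\mu_{n,1}\le\mu_{n,2}$. Hence $a_{1}=-1$, $a_{2}=1$, giving (44) and $\mu_{n,2}-\mu_{n,1}=2|q_{2n+1}|+o(n^{-1})>0$, so both eigenvalues are simple.

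I do not anticipate a genuine obstacle: the only non-mechanical point is justifying the antiperiodic decomposition and normalization displayed above, but these follow from [1] by the same iteration of the antiperiodic analogue of (6) that produced (11)--(13) in the periodic case, since the spectral gaps between $\pi^{2}(2n+1)^{2}$ and the neighbouring odd modes $\pi^{2}(2n-1)^{2}$, $\pi^{2}(2n+3)^{2}$ are of the same order $n$ as in the periodic case. Everything else transfers symbol-for-symbol.
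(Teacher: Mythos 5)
Your proposal is correct and follows exactly the route the paper intends: the paper itself gives no separate proof of this theorem, stating only that one uses the antiperiodic analogue of (30) and reasons as in the proofs of Theorems 1 and 2, which is precisely the substitution dictionary and three-case analysis you carry out. Your explicit recording of the antiperiodic analogues of (28)--(29) and (34) fills in the only details the paper leaves implicit.
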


\begin{corollary}
$(a)$ If
\[
q_{2n+1}=o\left(  n^{-1}\right)  ,
\]
then the eigenvalues \ $\mu_{n,j}$ for $j=1,2$\ satisfy the asymptotic
formulas
\[
\mu_{n,j}=(2\pi n+\pi)^{2}+o\left(  n^{-1}\right)  .
\]

$(b)$ Let $\mathbb{N}_{2}(\varepsilon)$ be the subset of the set $\left\{
n\in\mathbb{N}:\text{ }n>N\right\}  $ such that
\[
\left\vert q_{2n+1}\right\vert \geq\frac{\varepsilon}{n},
\]
for $n\in\mathbb{N}_{2}(\varepsilon),$ where $\varepsilon$ is a fixed positive
number and $N$ is a sufficiently large positive integer. Then the eigenvalues
\ $\mu_{n,j}$ for $n\in\mathbb{N}_{2}(\varepsilon)$ and $j=1,2$\ are simple
and satisfy asymptotic formulas (44).
\end{corollary}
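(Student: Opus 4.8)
The plan is to derive this corollary directly from Theorem 3, exactly as Corollary 1 was derived from Theorems 1 and 2. The antiperiodic setting is structurally identical to the periodic one: once (30) is replaced by its antiperiodic analogue $(\mu_{n,j}-(2\pi n+\pi)^{2})v_{n,j}=q_{2n+1}\overline{v_{n,j}}+\widetilde{\alpha}_{n,j}$ with $\widetilde{\alpha}_{n,j}=o(n^{-1})$, every estimate transfers verbatim with $q_{2n}$ replaced by $q_{2n+1}$ and $\alpha_{n,j}$ by $\widetilde{\alpha}_{n,j}$. Thus I would not redo any of the multiplication-and-orthogonality arguments, but instead invoke the conclusions of Theorem 3 as black boxes.

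For part $(a)$, I would simply substitute the hypothesis $q_{2n+1}=o(n^{-1})$ into the equality of Theorem 3$(a)$. Since $\mu_{n,j}-(2\pi n+\pi)^{2}=a_{j}\left\vert q_{2n+1}\right\vert +o(n^{-1})$ with $a_{j}\in\{1,-1\}$, and $\left\vert q_{2n+1}\right\vert =o(n^{-1})$ by assumption, the leading term $a_{j}\left\vert q_{2n+1}\right\vert$ is absorbed into the error $o(n^{-1})$. This gives $\mu_{n,j}=(2\pi n+\pi)^{2}+o(n^{-1})$ at once, with no further work.

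For part $(b)$, the goal is to check that the hypothesis of Theorem 3$(b)$, namely $\left\vert q_{2n+1}\right\vert \geq 2(\left\vert \widetilde{\alpha}_{n,1}\right\vert +\left\vert \widetilde{\alpha}_{n,2}\right\vert)$, is satisfied for all $n\in\mathbb{N}_{2}(\varepsilon)$ beyond some point. The key input is the estimate $\left\vert \widetilde{\alpha}_{n,1}\right\vert +\left\vert \widetilde{\alpha}_{n,2}\right\vert =o(n^{-1})$, which follows immediately from $\widetilde{\alpha}_{n,j}=o(n^{-1})$. Combining this with the lower bound $\left\vert q_{2n+1}\right\vert \geq \varepsilon/n$ on $\mathbb{N}_{2}(\varepsilon)$, I would enlarge $N$ if necessary so that $2(\left\vert \widetilde{\alpha}_{n,1}\right\vert +\left\vert \widetilde{\alpha}_{n,2}\right\vert)<\varepsilon/n\leq \left\vert q_{2n+1}\right\vert$ for every $n\in\mathbb{N}_{2}(\varepsilon)$ with $n>N$. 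This is precisely the hypothesis of Theorem 3$(b)$, so the simplicity of $\mu_{n,1},\mu_{n,2}$ and the asymptotic formula (44) follow for those $n$.

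There is essentially no genuine obstacle here, since this is a routine specialization of Theorem 3 mirroring Corollary 1. The only point deserving a moment of care is the uniformity in part $(b)$: the inequality $2(\left\vert \widetilde{\alpha}_{n,1}\right\vert +\left\vert \widetilde{\alpha}_{n,2}\right\vert)<\varepsilon/n$ must hold simultaneously for all large $n$ in the subset $\mathbb{N}_{2}(\varepsilon)$. But since $\widetilde{\alpha}_{n,j}=o(n^{-1})$ is a statement over all $n\to\infty$, multiplying through by $n$ reduces the requirement to $2n(\left\vert \widetilde{\alpha}_{n,1}\right\vert +\left\vert \widetilde{\alpha}_{n,2}\right\vert)\to 0$ against the fixed positive constant $\varepsilon$, which is immediate and uniform over the subset.
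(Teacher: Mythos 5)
Your proposal is correct and follows essentially the same route as the paper: the paper states that Corollary 2 is obtained by repeating, in the antiperiodic setting, the proof of Corollary 1, namely deducing part $(a)$ directly from Theorem 3$(a)$ and part $(b)$ by noting that $\left\vert q_{2n+1}\right\vert \geq \varepsilon/n$ together with $\left\vert \widetilde{\alpha}_{n,1}\right\vert +\left\vert \widetilde{\alpha}_{n,2}\right\vert = o(n^{-1})$ yields the hypothesis of Theorem 3$(b)$ for sufficiently large $n$. Your extra remark on uniformity is a harmless elaboration of the same argument.
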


Now let us consider the eigenfunctions of the periodic and antiperiodic problems.

\begin{theorem}
If (43) holds, then the eigenfunction $\varphi_{n,j}(x)$ corresponding to
$\lambda_{n,j}$ can be chosen so that
\begin{equation}
\varphi_{n,j}(x)=e^{i2\pi nx}+\gamma_{2n,j}e^{-i2\pi nx}+O(n^{-1}) \tag{45}%
\end{equation}
for $j=1,2,$ where
\[
\gamma_{2n,j}=\frac{(-1)^{j}\left\vert q_{2n}\right\vert }{q_{2n}}+o(1).
\]

\end{theorem}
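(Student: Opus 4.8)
The plan is to obtain $\varphi_{n,j}$ by rescaling the normalized real-valued eigenfunction $\Psi_{n,j}$ of (28) so that the coefficient of $e^{i2\pi nx}$ becomes exactly $1$. Since (29) guarantees $\left\vert u_{n,j}\right\vert =\frac{\sqrt{2}}{2}+O(n^{-2})\neq0$ for all large $n$, I would set $\varphi_{n,j}=u_{n,j}^{-1}\Psi_{n,j}$. Dividing (28) through by $u_{n,j}$ then yields directly the form (45) with
\[
\gamma_{2n,j}=\frac{\overline{u_{n,j}}}{u_{n,j}},
\]
and the remainder $u_{n,j}^{-1}h_{n,j}$ is $O(n^{-1})$ in the $L_{2}$-norm because $\left\vert u_{n,j}\right\vert $ is bounded away from $0$ while $\left\Vert h_{n,j}\right\Vert =O(n^{-1})$. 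Hence the only genuine content is to identify this $\gamma_{2n,j}$ with the claimed expression.

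For that I would combine the two facts that become available under hypothesis (43): the scalar relation (30) and the eigenvalue asymptotics (32), the latter being legitimate for $n\in\mathbb{N}_{1}(\varepsilon)$ by Corollary 1$(b)$. Substituting $\lambda_{n,j}-(2\pi n)^{2}=(-1)^{j}\left\vert q_{2n}\right\vert +o(n^{-1})$ from (32) into (30) and dividing by $u_{n,j}$ gives
\[
q_{2n}\gamma_{2n,j}=q_{2n}\frac{\overline{u_{n,j}}}{u_{n,j}}=(-1)^{j}\left\vert q_{2n}\right\vert +o(n^{-1})-\frac{\alpha_{n,j}}{u_{n,j}},
\]
and since $\alpha_{n,j}=o(n^{-1})$ with $\left\vert u_{n,j}\right\vert $ bounded below, the last two error terms merge into a single $o(n^{-1})$, so that $q_{2n}\gamma_{2n,j}=(-1)^{j}\left\vert q_{2n}\right\vert +o(n^{-1})$.

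The final and decisive step is to divide by $q_{2n}$, producing
\[
\gamma_{2n,j}=\frac{(-1)^{j}\left\vert q_{2n}\right\vert }{q_{2n}}+\frac{o(n^{-1})}{q_{2n}}.
\]
Here is precisely where hypothesis (43) is indispensable: it supplies the lower bound $\left\vert q_{2n}\right\vert ^{-1}\leq n/\varepsilon$, whence the residual term is bounded in modulus by $\varepsilon^{-1}n\cdot o(n^{-1})=o(1)$, which is exactly the claimed error. I expect this division to be the only delicate point of the argument, since without the lower bound (43) the quotient $o(n^{-1})/q_{2n}$ need not be small and the identification of $\gamma_{2n,j}$ would break down; everything preceding it is routine rescaling and bookkeeping of the error terms already recorded in (28)--(32).
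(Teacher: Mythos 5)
Your proposal is correct and follows essentially the same route as the paper: rescale $\Psi_{n,j}$ by $u_{n,j}^{-1}$ using (28)--(29), then identify $\gamma_{2n,j}=\overline{u_{n,j}}/u_{n,j}$ by combining (30) with (32) and using (43) to control the division by $q_{2n}$. Your explicit justification of why the residual $o(n^{-1})/q_{2n}$ is $o(1)$ is exactly the step the paper compresses into ``this equality with (43) and (29) implies (46).''
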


\begin{proof}
By Corollary 1, $\lambda_{n,1}$ satisfies (32) for $j=1.$ Therefore, from
equality (30) we obtain that
\[
-\left\vert q_{2n}\right\vert u_{n,1}=q_{2n}\overline{u_{n,1}}+o(n^{-1}),
\]
This equality with (43) and (29) implies that
\begin{equation}
\frac{\overline{u_{n,1}}}{u_{n,1}}=\frac{-\left\vert q_{2n}\right\vert
}{q_{2n}}+o(1) \tag{46}%
\end{equation}
Denoting
\[
\gamma_{2n,j}=\frac{\overline{u_{n,j}}}{u_{n,j}},\text{ }\varphi
_{n,j}(x)=\frac{\Psi_{n,j}(x)}{u_{n,j}},
\]
using\ (28) and (46) we see that (45) holds for $j=1$.

Taking into account that $\lambda_{n,2}$ satisfies (32) for $j=2$ in the same
way we get the proof of (45) for $j=2$.
\end{proof}

Instead Corollary 1 using Corollary 2 and arguing as in the proof of Theorem 4
\ we get

\begin{theorem}
If the condition of Corollary 2 $(b)$ is satisfied, then the eigenfunction
$\phi_{n,j}(x)$ corresponding to $\mu_{n,j}$ can be chosen so that
\begin{equation}
\phi_{n,j}(x)=e^{i\left(  2\pi n+\pi\right)  x}+\gamma_{2n+1,j}e^{-i(2\pi
n+\pi)x}+O(n^{-1}) \tag{47}%
\end{equation}
for $j=1,2,$ where
\[
\gamma_{2n+1,j}=\frac{(-1)^{j}\left\vert q_{2n+1}\right\vert }{q_{2n+1}%
}+o(1).
\]

\end{theorem}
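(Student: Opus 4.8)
The plan is to transcribe the proof of Theorem 4 into the antiperiodic setting, replacing each periodic object by its antiperiodic counterpart: $\lambda_{n,j}$ by $\mu_{n,j}$, $(2\pi n)^{2}$ by $(2\pi n+\pi)^{2}$, $q_{2n}$ by $q_{2n+1}$, and $u_{n,j}=(\Psi_{n,j},e^{i2\pi nx})$ by $v_{n,j}=(\Phi_{n,j},e^{i\pi(2n+1)x})$. The role of (30) is now played by the relation recorded just before Theorem 3,
\[
(\mu_{n,j}-(2\pi n+\pi)^{2})v_{n,j}=q_{2n+1}\overline{v_{n,j}}+\widetilde{\alpha}_{n,j},\qquad \widetilde{\alpha}_{n,j}=o(n^{-1}).
\]

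First I would invoke Corollary 2$(b)$: under its hypothesis $\mu_{n,1}$ satisfies (44) for $j=1$, so $\mu_{n,1}-(2\pi n+\pi)^{2}=-\left\vert q_{2n+1}\right\vert +o(n^{-1})$. Substituting this into the displayed relation yields
\[
-\left\vert q_{2n+1}\right\vert v_{n,1}=q_{2n+1}\overline{v_{n,1}}+o(n^{-1}).
\]
Next, using the lower bound $\left\vert q_{2n+1}\right\vert \geq \varepsilon/n$ from the hypothesis of Corollary 2$(b)$ together with the antiperiodic analog of (29), namely $\left\vert v_{n,j}\right\vert =\frac{\sqrt{2}}{2}+O(n^{-2})$, I would divide through by $v_{n,1}$ and absorb the remainder to obtain
\[
\frac{\overline{v_{n,1}}}{v_{n,1}}=\frac{-\left\vert q_{2n+1}\right\vert }{q_{2n+1}}+o(1),
\]
which is exactly the value of $\gamma_{2n+1,1}$ claimed in the statement. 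Setting $\gamma_{2n+1,j}=\overline{v_{n,j}}/v_{n,j}$ and $\phi_{n,j}(x)=\Phi_{n,j}(x)/v_{n,j}$ and inserting this ratio into the antiperiodic version of the decomposition (28), that is $\Phi_{n,j}(x)=v_{n,j}e^{i(2\pi n+\pi)x}+\overline{v_{n,j}}e^{-i(2\pi n+\pi)x}+\widetilde{h}_{n,j}(x)$ with $\Vert \widetilde{h}_{n,j}\Vert =O(n^{-1})$, gives (47) for $j=1$. The case $j=2$ is identical, using that $\mu_{n,2}$ satisfies (44) for $j=2$, which produces the sign $(-1)^{2}=+1$ in $\gamma_{2n+1,2}$.

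The point requiring care is not the computation but the supporting structure it rests on: the antiperiodic analogs of (28) and (29) and the reduction to a real-valued eigenfunction. These are not stated as separate lemmas, yet they follow verbatim from the periodic arguments, since the decomposition (13) and the normalization hold for $L_{\pi}(q)$ by the same Fourier estimates, and because $q$ and $\mu_{n,j}$ are real one may again replace $\Phi_{n,j}$ by a real-valued eigenfunction, giving $(\Phi_{n,j},e^{-i\pi(2n+1)x})=\overline{v_{n,j}}$. Once these are in hand, division by $v_{n,j}$ is legitimate precisely because $\left\vert v_{n,j}\right\vert $ is bounded away from zero, and the proof closes exactly as in Theorem 4.
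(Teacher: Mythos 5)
Your proposal is correct and follows exactly the route the paper intends: the paper proves Theorem 5 only by the remark "Instead Corollary 1 using Corollary 2 and arguing as in the proof of Theorem 4," and your transcription of that proof into the antiperiodic setting (using the relation for $v_{n,j}$ stated before Theorem 3, Corollary 2$(b)$, and the antiperiodic analogs of (28) and (29)) is precisely that argument, with the supporting details made explicit.
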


\begin{remark}
Since $\frac{\left\vert q_{2n}\right\vert }{q_{2n}}=e^{-i\alpha_{2n}},$ where
$\alpha_{2n}=\arg q_{2n},$ formula (45) can be written in the form
\[
\varphi_{n,j}(x)=e^{i2\pi nx}+(-1)^{j}e^{-i\alpha_{2n}}e^{-i2\pi nx}+o(1).
\]
Multiplying by $e^{\frac{1}{2}i\alpha_{2n}}$ we obtain an eigenfunctions
\[
e^{\frac{1}{2}i\alpha_{2n}}\varphi_{n,j}(x)=e^{\frac{1}{2}i\alpha_{2n}%
}e^{i2\pi nx}+(-1)^{j}e^{-\frac{1}{2}i\alpha_{2n}}e^{-i2\pi nx}+o(1),
\]
where
\[
e^{\frac{1}{2}i\alpha_{2n}}e^{i2\pi nx}-e^{-\frac{1}{2}\alpha_{2n}}e^{-i2\pi
nx}=2i\sin(2\pi nx+\frac{\alpha_{2n}}{2})
\]
and
\[
e^{\frac{1}{2}i\alpha_{2n}}e^{i2\pi nx}+e^{-\frac{1}{2}\alpha_{2n}}e^{-i2\pi
nx}=2\cos(2\pi nx+\frac{\alpha_{2n}}{2})
\]
Therefore, there exist normalized eigenfunctions $\Psi_{n,1}(x)$ and
$\Psi_{n,2}(x)$ of $L_{0}(q)$ corresponding respectively to the eigenvalues
$\lambda_{n,1}$ and $\lambda_{n,2}$ which satisfy the formulas
\[
\Psi_{n,1}(x)=\sqrt{2}\sin(2\pi nx+\frac{\alpha_{2n}}{2})+o(1)
\]
and
\[
\Psi_{n,2}(x)=\sqrt{2}\cos(2\pi nx+\frac{\alpha_{2n}}{2})+o(1).
\]

In the same way, from (47) we conclude that there exist normalized
eigenfunctions $\Phi_{n,1}(x)$ and $\Phi_{n,2}(x)$ of $L_{\pi}(q)$
corresponding respectively to the eigenvalues $\mu_{n,1}$ and $\mu_{n,2}$
which satisfy the formulas
\[
\Phi_{n,1}(x)=\sqrt{2}\sin((2\pi n+\pi)x+\frac{\alpha_{2n+1}}{2})+o(1)
\]
and
\[
\Phi_{n,2}(x)=\sqrt{2}\cos((2\pi n+\pi)x+\frac{\alpha_{2n+1}}{2})+o(1),
\]
where $\alpha_{2n+1}=\arg q_{2n+1}.$
\end{remark}

Thus using (7) for $m=1$ we obtained asymptotic formulas of order $o\left(
n^{-1}\right)  .$ Now, using (7) for $m=2$ we obtain asymptotic formulas of
order $o\left(  n^{-2}\right)  $ that also has an elegant form. From (10),
Proposition 1, (16) and (17) for $s=0,$ it follows that (7) for $m=2$ can be
written in the form
\begin{equation}
(\lambda_{n,j}-(2\pi n)^{2}-A_{2}((2\pi n)^{2})u_{n,j}=(q_{2n}-S_{2n}%
+2Q_{0}Q_{2n})\overline{u_{n,j}}+o(n^{-2}) \tag{48}%
\end{equation}
where
\[
A_{2}((2\pi n)^{2}=a_{1}((2\pi n)^{2})+a_{2}((2\pi n)^{2},
\]%
\[
a_{1}((2\pi n)^{2})=\sum\limits_{k\neq0,2n}\frac{\left\vert q_{k}\right\vert
^{2}}{(2\pi n)^{2}-(2\pi(n-k))^{2}}%
\]
and
\[
a_{2}((2\pi n)^{2})=\sum\limits_{k,k+l\neq0,2n}\frac{q_{k}q_{l}q_{-k-l}%
}{[(2\pi n)^{2}-(2\pi(n-k))^{2}][(2\pi n)^{2}-(2\pi(n-k-l))^{2}]}.
\]

Instead of (30) using (48) and arguing as in the proof of Theorems 1 and 2 and
Corollary 1 we obtain the following.

\begin{theorem}
Let $\mathbb{N}_{3}(\varepsilon)$ be the subset of the set $\left\{
n\in\mathbb{N}:\text{ }n>N\right\}  $ such that
\begin{equation}
\left\vert q_{2n}-S_{2n}+2Q_{0}Q_{2n}\right\vert \geq\frac{\varepsilon}{n^{2}%
}, \tag{49}%
\end{equation}
for $n\in\mathbb{N}_{3},$ where $\varepsilon$ is a fixed positive number and
$N$ is a sufficiently large positive integer. Then the eigenvalues
\ $\lambda_{n,j}$ for $n\in\mathbb{N}_{3}$ and $j=1,2$\ are simple and satisfy
the asymptotic formulas
\begin{equation}
\lambda_{n,j}=(2\pi n)^{2}+A_{2}((2\pi n)^{2})+(-1)^{j}\left\vert
q_{2n}-S_{2n}+2Q_{0}Q_{2n}\right\vert +o(n^{-2}) \tag{50}%
\end{equation}
as $n\rightarrow\infty.$
\end{theorem}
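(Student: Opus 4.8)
The plan is to exploit the fact that (48) has exactly the same algebraic form as (30) and to replay, almost verbatim, the reasoning of Theorems 1 and 2 and Corollary 1. Writing $c_{n}=q_{2n}-S_{2n}+2Q_{0}Q_{2n}$ for the new off-diagonal coefficient, (48) reads $(\lambda_{n,j}-(2\pi n)^{2}-A_{2}((2\pi n)^{2}))u_{n,j}=c_{n}\overline{u_{n,j}}+o(n^{-2})$, which is obtained from (30) under the substitutions $\lambda_{n,j}-(2\pi n)^{2}\mapsto\lambda_{n,j}-(2\pi n)^{2}-A_{2}((2\pi n)^{2})$, $q_{2n}\mapsto c_{n}$, and $\alpha_{n,j}=o(n^{-1})\mapsto o(n^{-2})$. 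Two facts make the transfer legitimate: the shift $A_{2}((2\pi n)^{2})=a_{1}((2\pi n)^{2})+a_{2}((2\pi n)^{2})$ is real---immediate for $a_{1}$ from the explicit real sum (24) and valid for $a_{2}$ by the substitution argument of Lemma 1$(b)$, which applies verbatim at the real point $(2\pi n)^{2}$---so that $\lambda_{n,j}-(2\pi n)^{2}-A_{2}((2\pi n)^{2})$ is real; and the normalization (29) and orthogonality (34) are properties of the $\Psi_{n,j}$ alone, hence unchanged.

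First I would repeat the proof of Theorem 1$(a)$ with (48) in place of (30). Dividing by $u_{n,j}$, which is nonzero for large $n$ by (29), and using $\left\vert \overline{u_{n,j}}/u_{n,j}\right\vert =1$ with (29), one gets $\left\vert \left\vert \lambda_{n,j}-(2\pi n)^{2}-A_{2}((2\pi n)^{2})\right\vert -\left\vert c_{n}\right\vert \right\vert \leq o(n^{-2})$, and since the left entry is real this yields $\lambda_{n,j}-(2\pi n)^{2}-A_{2}((2\pi n)^{2})=a_{j}\left\vert c_{n}\right\vert +o(n^{-2})$ with $a_{j}\in\{1,-1\}$; equivalently, as in (36)--(37), $\lambda_{n,j}-(2\pi n)^{2}-A_{2}((2\pi n)^{2})=a_{j}(\left\vert c_{n}\right\vert +\beta_{n,j})$ with $\left\vert \beta_{n,j}\right\vert =o(n^{-2})$.

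Next I would mirror the proof of Theorem 2 to fix the signs. The part played there by (35) is now played by (49): since $\left\vert c_{n}\right\vert \geq\varepsilon n^{-2}$ while the errors are $o(n^{-2})$, for all large $n\in\mathbb{N}_{3}$ the coefficient $\left\vert c_{n}\right\vert $ dominates, exactly as (43) forces (35) in Corollary 1$(b)$. Assuming $a_{1}=a_{2}$ and forming the combination of the two copies of (48) (multiply by $\overline{u_{n,2}}$ and $-\overline{u_{n,1}}$ and add), the $c_{n}$ cross terms cancel; dividing by $\left\vert c_{n}\right\vert $ and using (49) reproduces (33), which together with (34) contradicts (29). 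This rules out $a_{1}=a_{2}=1$ and $a_{1}=a_{2}=-1$. The case $a_{1}=1,a_{2}=-1$ is excluded since it would give $\lambda_{n,1}-\lambda_{n,2}=2\left\vert c_{n}\right\vert +\beta_{n,1}+\beta_{n,2}>0$, against $\lambda_{n,1}\leq\lambda_{n,2}$. Hence $a_{1}=-1,\,a_{2}=1$, the eigenvalues are simple, and (50) follows.

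I expect no genuine obstacle: the whole argument is a transcription in which $c_{n}$ and the real shift $A_{2}((2\pi n)^{2})$ take over the roles of $q_{2n}$ and $0$. The only points needing care are bookkeeping ones---confirming the reality of $A_{2}((2\pi n)^{2})$ so the modulus estimate in the analogue of Theorem 1$(a)$ is valid, and verifying that the $o(n^{-2})$ error in (48) is genuinely dominated by $\varepsilon n^{-2}$ under (49), so the contradiction in the analogue of Theorem 2 survives division by $\left\vert c_{n}\right\vert $.
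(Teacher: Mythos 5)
Your proposal is correct and follows essentially the same route as the paper: the paper likewise divides (48) by $u_{n,j}$, uses (29) and the reality of $\lambda_{n,j}-(2\pi n)^{2}-A_{2}((2\pi n)^{2})$ (via Lemma 1$(b)$) to get the sign ambiguity $a_{j}=\pm1$, and then fixes $a_{1}=-1$, $a_{2}=1$ by repeating the argument of Theorem 2. Your write-up merely spells out the sign-fixing step that the paper only cites, so there is no substantive difference.
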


\begin{proof}
Dividing both sides of (48) by $u_{n,j}$ and using (29) we obtain
\[
\left\vert (\lambda_{n,j}-(2\pi n)^{2}-A_{2}((2\pi n)^{2})-(q_{2n}%
-S_{2n}+2Q_{0}Q_{2n})\overline{u_{n,j}}\frac{1}{u_{n,j}}\right\vert
=o(n^{-2})
\]
and%
\[
\left\vert (\lambda_{n,j}-(2\pi n)^{2}-A_{2}((2\pi n)^{2})\right\vert
-\left\vert (q_{2n}-S_{2n}+2Q_{0}Q_{2n})\right\vert =o(n^{-2})
\]
Thus we have
\[
\lambda_{n,j}=(2\pi n)^{2}+A_{2}((2\pi n)+a_{j}\left\vert q_{2n}-S_{2n}%
+2Q_{0}Q_{2n}\right\vert +o(n^{-2}),
\]
where $a_{j}=\pm1,$ since$\lambda_{n,j}-(2\pi n)^{2}-A_{2}((2\pi n)^{2}$ is a
real number according to Lemma 1 $(b).$ Now, repeating the proof of equalities
$a_{1}=-1$ and $a_{2}=1$ which was done in the proof of \ Theorem 2 we
complete the proof of the theorem.
\end{proof}

The proof of corresponding results for antiperiodic problem can be carried out
in a similar way.

\begin{theorem}
Let $\mathbb{N}_{4}(\varepsilon)$ be the subset of the set $\left\{
n\in\mathbb{N}:\text{ }n>N\right\}  $ such that
\[
|q_{2n+1}-S_{2n+1}+Q_{0}Q_{2n+1}|\geq\varepsilon n^{-2},
\]
for $n\in\mathbb{N}_{4},$ where $\varepsilon$ is a fixed positive number and
$N$ is a sufficiently large positive integer. Then the eigenvalues
\ $\mu_{n,j}$ of the operator $L_{\pi}(q)$ for $n\in\mathbb{N}_{4}$ are simple
and satisfy the asymptotic formulas
\[
\mu_{n,j}=(2\pi n+\pi)^{2}+A_{2}((2\pi n+\pi)^{2})+
\]%
\[
(-1)^{j}\left\vert q_{2n+1}-S_{2n+1}+2Q_{0}Q_{2n+1}\right\vert +o(n^{-2})
\]
for $j=1,2.$
\end{theorem}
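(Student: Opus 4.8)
The plan is to mirror the proof of Theorem 6, carrying every ingredient over from the periodic resonance index $2n$ to the antiperiodic index $2n+1$. The decisive step is to establish the antiperiodic analogue of the identity (48), namely
\[
(\mu_{n,j}-(2\pi n+\pi)^{2}-A_{2}((2\pi n+\pi)^{2}))v_{n,j}=(q_{2n+1}-S_{2n+1}+2Q_{0}Q_{2n+1})\overline{v_{n,j}}+o(n^{-2}).
\]
To obtain it I would iterate the basic antiperiodic eigenvalue relation — the analogue of (6), whose first iterate $(\mu_{n,j}-(2\pi n+\pi)^{2})v_{n,j}=q_{2n+1}\overline{v_{n,j}}+\widetilde{\alpha}_{n,j}$ appears just before Theorem 3 — taking $m=2$, which produces the antiperiodic counterpart of (7). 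Here the resonant coupling coefficient is $q_{2n+1}$, since $q_{2n+1}$ is the Fourier index matching $e^{i\pi(2n+1)x}$ to $e^{-i\pi(2n+1)x}$, and the free frequency is $(2\pi n+\pi)^{2}$. I would then discard the remainder by the analogue of (10), freeze $\mu_{n,j}$ at $(2\pi n+\pi)^{2}$ inside $a_{1},a_{2}$ by the analogue of Proposition 1, and substitute the explicit values of the $b$-terms via the analogues of (16) and (17) with $s=0$.

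Two auxiliary facts then require their antiperiodic versions. First, the analogue of Lemma 1: that $a_{1}((2\pi n+\pi)^{2})=o(n^{-1})$ and that every $a_{k}((2\pi n+\pi)^{2})$ is real for real $q$. Reality follows from the same conjugation-and-reindexing argument as in Lemma 1$(b)$. For the estimate I would repeat the computation (24)--(26) with the resonance frequency $2\pi(2n+1)$ in place of $4\pi n$: grouping the $k$ and $-k$ terms yields
\[
a_{1}((2\pi n+\pi)^{2})=\frac{1}{2\pi^{2}}\sum_{k\in\mathbb{N},\,k\neq2n+1}\frac{\left\vert q_{k}\right\vert ^{2}}{(2n+1-k)(2n+1+k)},
\]
and the associated oscillatory integral $\int_{0}^{1}(Q(x,n)-Q_{0})^{2}e^{-i4\pi(2n+1)x}\,dx$ is $o(n^{-1})$ by the very uniform-convergence argument used before, since $2n+1\rightarrow\infty$. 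Second, I need the antiperiodic forms $b_{1}=-S_{2n+1}+2Q_{0}Q_{2n+1}+o(n^{-s-2})$ and negligible higher $b_{k}$; these come from Lemma 6 of [12] applied at the index $2n+1$.

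With the displayed identity in hand, the rest is verbatim the argument of Theorem 6 (hence of Theorem 2). I would divide by $v_{n,j}$, use the antiperiodic normalization $\left\vert v_{n,j}\right\vert =\frac{\sqrt{2}}{2}+O(n^{-2})$ together with $\left\vert \overline{v_{n,j}}/v_{n,j}\right\vert =1$, and pass to moduli; since $\mu_{n,j}-(2\pi n+\pi)^{2}-A_{2}((2\pi n+\pi)^{2})$ is real, this forces
\[
\mu_{n,j}=(2\pi n+\pi)^{2}+A_{2}((2\pi n+\pi)^{2})+a_{j}\left\vert q_{2n+1}-S_{2n+1}+2Q_{0}Q_{2n+1}\right\vert +o(n^{-2}),\qquad a_{j}=\pm1.
\]
To fix the signs I would argue exactly as in Theorem 2: assuming $a_{1}=a_{2}$ and combining the two identities with the orthogonality relation $(\Phi_{n,1},\Phi_{n,2})=0$ contradicts the normalization, whereas $a_{1}=1,a_{2}=-1$ contradicts the ordering $\mu_{n,1}\leq\mu_{n,2}$; hence $a_{1}=-1$ and $a_{2}=1$, which is the claimed formula, and simplicity follows from $\mu_{n,1}<\mu_{n,2}$.

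The main obstacle is the first part: assembling the antiperiodic analogue of (48). The genuinely new content is re-verifying that the sharp estimates of [12] — the explicit value of $b_{1}$ and the $o(n^{-s-2})$ control of the higher $b_{k}$ — persist with $2n$ replaced by $2n+1$. If Lemma 6 of [12] is stated only for the periodic index, one must re-run its oscillatory-integral estimates at the half-integer frequency; those estimates are identical in form, because only $\left\vert 2n+1\right\vert \rightarrow\infty$ is used, but this is the point that must be checked rather than merely quoted.
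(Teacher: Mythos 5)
Your proposal is correct and follows exactly the route the paper intends: the paper gives no explicit proof of this theorem, stating only that "the proof of corresponding results for antiperiodic problem can be carried out in a similar way" to Theorem 6, and your argument is precisely that transfer — the antiperiodic analogue of (48) via the iterated relation at frequency $\pi(2n+1)$, the analogues of Proposition 1, Lemma 1 and the estimates (16)--(17) from [12], followed by the modulus and sign-determination argument of Theorems 2 and 6. Your closing caveat about re-verifying Lemma 6 of [12] at the half-integer frequency is a legitimate point of care, but it does not constitute a different approach.
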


Now, consider the gaps in the high energy region.

\begin{theorem}
For the length of the gaps $\Delta_{n}=(\lambda_{n,1},\lambda_{n,2})$ and
$\Omega_{n}=\left(  \mu_{n,1},\mu_{n,2}\right)  $ \ the following formulas
hold
\[
\left\vert \Delta_{n}\right\vert =2\left\vert q_{2n}\right\vert +o\left(
n^{-1}\right)  ,\text{ }\left\vert \Delta_{n}\right\vert =2\left\vert
q_{2n}-S_{2n}+2Q_{0}Q_{2n}\right\vert +o(n^{-2})
\]
and
\[
\left\vert \Omega_{n}\right\vert =2\left\vert q_{2n+1}\right\vert +o\left(
n^{-1}\right)  ,\text{ }\left\vert \Omega_{n}\right\vert =2\left\vert
q_{2n+1}-S_{2n+1}+2Q_{0}Q_{2n+1}\right\vert +o(n^{-2}).
\]
where $\left\vert \Delta\right\vert $ denotes the length of the interval
$\Delta.$
\end{theorem}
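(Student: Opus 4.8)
The plan is to extract both gap lengths directly from the eigenvalue asymptotics established earlier, handling the first order ($o(n^{-1})$) and second order ($o(n^{-2})$) formulas, and the periodic ($\Delta_n$) and antiperiodic ($\Omega_n$) families, in parallel, since they differ only in which of the preceding theorems is cited. The only genuine issue is that the clean gap formulas are asserted for all large $n$, whereas the sharp eigenvalue asymptotics $(32)$, $(50)$ were proved only on the subsets where a sign condition holds; the work is to cover the complementary $n$ by a dichotomy.

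First I would treat $|\Delta_n|=2|q_{2n}|+o(n^{-1})$. By Theorem 1$(a)$, which is unconditional, $\lambda_{n,j}-(2\pi n)^2=a_j|q_{2n}|+O(\alpha_{n,j})$ with $a_j\in\{-1,+1\}$ and $\alpha_{n,j}=o(n^{-1})$. Since $\lambda_{n,1}\le\lambda_{n,2}$, subtracting the $j=1$ relation from the $j=2$ relation gives $|\Delta_n|=(a_2-a_1)|q_{2n}|+\rho_n$ with $\rho_n=o(n^{-1})$, so it remains to show $(a_2-a_1-2)|q_{2n}|=o(n^{-1})$. Here I invoke the dichotomy furnished by Theorem 2: for each $n$, either $|q_{2n}|\ge 2(|\alpha_{n,1}|+|\alpha_{n,2}|)$, in which case Theorem 2 forces $a_1=-1$, $a_2=1$ and the factor $a_2-a_1-2$ vanishes; or $|q_{2n}|<2(|\alpha_{n,1}|+|\alpha_{n,2}|)=o(n^{-1})$, in which case $|q_{2n}|$ is itself $o(n^{-1})$ and, since $|a_2-a_1-2|\le 4$, the product is $o(n^{-1})$ as well. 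Combining the two branches gives the uniform estimate $n\big||\Delta_n|-2|q_{2n}|\big|\le 8\,n\big(|\alpha_{n,1}|+|\alpha_{n,2}|\big)+n\,|\rho_n|\to 0$, so $|\Delta_n|=2|q_{2n}|+o(n^{-1})$ for all large $n$.

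The second-order formula $|\Delta_n|=2|q_{2n}-S_{2n}+2Q_{0}Q_{2n}|+o(n^{-2})$ is proved identically, with $(30)$ replaced by $(48)$. Writing $c_{2n}=q_{2n}-S_{2n}+2Q_{0}Q_{2n}$, the first part of the proof of Theorem 6 yields, unconditionally, $\lambda_{n,j}-(2\pi n)^2-A_{2}((2\pi n)^2)=a_j|c_{2n}|+o(n^{-2})$ with $a_j\in\{-1,+1\}$. The term $A_{2}((2\pi n)^2)$ is common to $j=1$ and $j=2$, so it cancels in the difference, giving $|\Delta_n|=(a_2-a_1)|c_{2n}|+o(n^{-2})$; the same dichotomy, now with the $o(n^{-2})$ error of $(48)$ in place of $\alpha_{n,j}$ and the sign argument from the proof of Theorem 6, shows $(a_2-a_1-2)|c_{2n}|=o(n^{-2})$, whence $|\Delta_n|=2|c_{2n}|+o(n^{-2})$. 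The two formulas for $|\Omega_n|$ follow verbatim, replacing $q_{2n},S_{2n},Q_{2n},\alpha_{n,j}$ and Theorems 1, 2, 6 by $q_{2n+1},S_{2n+1},Q_{2n+1},\widetilde{\alpha}_{n,j}$ and Theorems 3 and 7 (the antiperiodic analogue of Theorem 2 being contained in Theorem 3$(b)$).

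The step I expect to be the main obstacle is precisely the case where the relevant sign condition fails, i.e. the $n$ lying outside $\mathbb{N}_1(\varepsilon)$ or $\mathbb{N}_3(\varepsilon)$, where one cannot simply insert $(32)$ or $(50)$. The dichotomy disposes of exactly this: failure of the sign condition forces the relevant coefficient ($q_{2n}$, resp. $c_{2n}$) to be smaller than the error order, so both sides of the gap formula collapse to the same order and the claimed identity reduces to comparing two small quantities. I would take care that the bound on $n\,\big||\Delta_n|-2|q_{2n}|\big|$ (resp. $n^{2}\,\big||\Delta_n|-2|c_{2n}|\big|$) is uniform across both branches, so that the error is genuinely $o(n^{-1})$ (resp. $o(n^{-2})$) and not merely $o$ along each subsequence separately.
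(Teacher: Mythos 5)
Your proof is correct, but it follows a genuinely different route from the paper's. The paper avoids the sign dichotomy entirely: it observes that the orthogonality relation (34) yields $\overline{u_{n,2}}/u_{n,2}=-\,\overline{u_{n,1}}/u_{n,1}+O(n^{-2})$ (this is (51)), divides (30) by $u_{n,j}$, and subtracts the $j=1$ and $j=2$ identities to get $\lambda_{n,2}-\lambda_{n,1}=\bigl|\,2q_{2n}(\overline{u_{n,1}}/u_{n,1}+O(n^{-2}))+o(n^{-1})\,\bigr|=2|q_{2n}|+o(n^{-1})$, the absolute value costing nothing because $\lambda_{n,2}\ge\lambda_{n,1}$ and $|\overline{u_{n,1}}/u_{n,1}|=1$; the second-order and antiperiodic formulas then follow by the same one-line computation with (48) and the antiperiodic analogue of (30). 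This is unconditional, so the paper never needs to determine the individual signs $a_j$ or to split cases according to the size of $q_{2n}$. You instead treat Theorems 1, 2 and 6 as black boxes and patch the set of $n$ where the sign condition (35) (resp.\ its second-order analogue) fails by noting that there the relevant coefficient is itself dominated by the error, with the uniform bound $n\,\bigl|\,|\Delta_n|-2|q_{2n}|\,\bigr|\le 8n(|\alpha_{n,1}|+|\alpha_{n,2}|)+n|\rho_n|\to 0$ valid across both branches --- which is precisely the point that must be made explicit, and you make it. The paper's route buys brevity and sidesteps sign determination altogether; your route buys modularity, reusing the already-proved eigenvalue asymptotics verbatim and isolating exactly why the degenerate $n$ (outside $\mathbb{N}_1(\varepsilon)$ or $\mathbb{N}_3(\varepsilon)$) are harmless. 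Both arguments are sound.
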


\begin{proof}
Let's prove the formulas for $\left\vert \Delta_{n}\right\vert .$ Proof of
formulas for $\left\vert \Omega_{n}\right\vert $ similar. If (43) and (49) are
satisfied, then the proof follows from Corollary 1 $(b)$ and (50),
respectively. We will now present a proof without assuming that (43) and (49)
hold. It follows from (34) that
\begin{equation}
\frac{\overline{u_{n,2}}}{u_{n,2}}=-\frac{\overline{u_{n,1}}}{u_{n,1}%
}+O(n^{-2}) \tag{51}%
\end{equation}
Therefore, using (30) for $j=1,2$ and (5) we obtain
\[
\lambda_{n,2}-\lambda_{n,1}=\left\vert 2q_{2n}(\frac{\overline{u_{n,1}}%
}{u_{n,1}}+O(n^{-2}))+o(n^{-1})\right\vert =2\left\vert q_{2n}\right\vert
+o(n^{-1}).
\]
The first equality for $\left\vert \Delta_{n}\right\vert $ is proved. Instead
of (30) using (48) and repeating the above proof we get the proof of the
second equality for $\left\vert \Delta_{n}\right\vert .$
\end{proof}

Now, we derive asymptotic formulas of arbitrary order for eigenvalues of the
operators $L_{0}(q)$ and $L_{\pi}(q)$ \ for $q\in L_{1}(0,1).$ Dividing both
sides of (7) by $u_{n,j}$ and repeating the proof of (32) and using (10) we
obtain
\[
\lambda_{n,j}=(2\pi n)^{2}+A_{m}(\lambda_{n,j})+a_{j}\left\vert q_{2n}%
+B_{m}(\lambda_{n,j})\right\vert +O((\frac{\ln n}{n})^{m+1}),
\]
where $a_{j}$ is either $1$ or $-1.$ If there exists $\varepsilon>0$ such
that
\begin{equation}
\left\vert q_{2n}+B_{m}(\lambda_{n,j})\right\vert \geq\frac{\varepsilon}%
{n^{m}} \tag{52}%
\end{equation}
for $n\geq N,$ then, repeating the proof of the relation $a_{1}=-1$ and
$a_{2}=1$ which was done in the proof of Theorem 2, we obtain that
\begin{equation}
\lambda_{n,j}=(2\pi n)^{2}+A_{m}(\lambda_{n,j})+(-1)^{j}\left\vert
q_{2n}+B_{m}(\lambda_{n,j})\right\vert +O((\frac{\ln n}{n})^{m+1}) \tag{53}%
\end{equation}
for $m=0,1,...$, where $A_{0}(\lambda_{n,j})=B_{0}(\lambda_{n,j})=0,$
$A_{m}(\lambda_{n,j})$ and $B_{m}(\lambda_{n,j})$ for $m=1,2,...$ are defined
in (7). Now, we remove $\lambda_{n,j}$ from the right side of (53) and obtain
the asymptotic formulas
\begin{equation}
\lambda_{n,j}=E_{n,j,m}+O((\frac{\ln n}{n})^{m+1}) \tag{54}%
\end{equation}
for $m=0,1,...$, where the expression $E_{n,m,j}$ contains only Fourier
coefficients and are defined successively as follows
\[
E_{n,j,0}=(2\pi n)^{2},\text{ }E_{n,j,1}=(2\pi n)^{2}+A_{1}(E_{n,j,0}%
)^{2}+(-1)^{j}\left\vert q_{2n}+B_{1}(E_{n,j,0})\right\vert ,
\]
and%
\[
E_{n,j,k}=(2\pi n)^{2}+A_{k}(E_{n,j,k-1})^{2}+(-1)^{j}\left\vert q_{2n}%
+B_{k}(E_{n,k-1,j})\right\vert .
\]

\begin{remark}
Note that formulas (32) and (50) are more accurate formulas than (54) for
$m=0,1.$ Therefore in the next theorem we obtain asymptotic formulas for the
cases $m\geq2.$ Using Lemma 2, it is easy to check that (43)$\Longrightarrow
$(49)$\Longrightarrow$(52) for $m\geq2$.
\end{remark}

\begin{theorem}
$(a)$ If (52) is satisfied, then the eigenvalue $\lambda_{n,j}$ satisfies
formula (54) for $m=2,3,...$and the following estimate is valid%
\[
\left\vert \Delta_{n}\right\vert =E_{n,2,m}-E_{n,1,m}+O((\frac{\ln n}%
{n})^{m+1}).
\]
Moreover, if at least one of the inequalities (43) and (49) is satisfied, then
this theorem continues to be valid.

$(b)$ If $q_{2n}+B_{m}(\lambda_{n,j})=O(n^{-m})$ and $q_{2n}+B_{m}%
(\lambda_{n,j})=o(n^{-m})$ respectively, then $\left\vert \Delta
_{n}\right\vert =O(n^{-m})$ and $\left\vert \Delta_{n}\right\vert =o(n^{-m})$
\end{theorem}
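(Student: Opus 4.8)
The plan is to treat the two parts separately, leaning on the unconditional $m$-term identity that underlies (30) and (48). For part $(a)$ most of the work has already been carried out in the derivation of (53) and (54): under hypothesis (52) the signs are pinned down as $a_1=-1$, $a_2=1$ exactly as in the proof of Theorem 2, which gives (53), and the successive elimination of $\lambda_{n,j}$ from the right-hand side produces (54) for every $m=0,1,2,\dots$. I would record that (54) holds for $m\geq2$ and then subtract the $j=1$ instance from the $j=2$ instance. Since $\lambda_{n,1}\leq\lambda_{n,2}$ by convention (5), this yields
\[
|\Delta_n| = \lambda_{n,2} - \lambda_{n,1} = E_{n,2,m} - E_{n,1,m} + O\left((\tfrac{\ln n}{n})^{m+1}\right),
\]
which is the asserted estimate. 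The ``moreover'' clause is immediate from Remark 2: for $m\geq2$ one has the implication chain (43)$\Rightarrow$(49)$\Rightarrow$(52), so either hypothesis forces (52) and the main argument applies verbatim.

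For part $(b)$ I would not use (52) at all, since no lower bound on $q_{2n}+B_m$ is available; instead I rely on the general form of (7). Writing $u_{-n,j}=\overline{u_{n,j}}$ and absorbing $R_m$ through (10), equation (7) for general $m$ reads
\[
(\lambda_{n,j} - (2\pi n)^2 - A_m(\lambda_{n,j}))u_{n,j} = (q_{2n} + B_m(\lambda_{n,j}))\overline{u_{n,j}} + O\left((\tfrac{\ln n}{n})^{m+1}\right).
\]
Dividing by $u_{n,j}$ and using $|u_{n,j}|^{-1}=\sqrt{2}+O(n^{-2})$ from (29), then subtracting the equation for $j=1$ from that for $j=2$, I obtain
\[
\lambda_{n,2} - \lambda_{n,1} = A_m(\lambda_{n,2}) - A_m(\lambda_{n,1}) + \left[(q_{2n}+B_m(\lambda_{n,2}))\tfrac{\overline{u_{n,2}}}{u_{n,2}} - (q_{2n}+B_m(\lambda_{n,1}))\tfrac{\overline{u_{n,1}}}{u_{n,1}}\right] + O\left((\tfrac{\ln n}{n})^{m+1}\right).
\]
Under the hypothesis $q_{2n}+B_m(\lambda_{n,j})=O(n^{-m})$ the bracketed term is $O(n^{-m})$, because $|\overline{u_{n,j}}/u_{n,j}|=1$; and $(\ln n/n)^{m+1}=o(n^{-m})$, so both explicit remainders are $O(n^{-m})$.

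The crux is then to show that $A_m(\lambda_{n,2})-A_m(\lambda_{n,1})$ is a small multiple of the gap itself. Each summand $a_k(\lambda)$ is differentiable in $\lambda$ near $(2\pi n)^2$, and differentiating introduces one extra denominator factor $\lambda-(2\pi(n-n_1-\cdots-n_s))^2$; the dominant contribution comes from $a_1$ and gives $a_1'(\lambda)=O(n^{-2})$ near $(2\pi n)^2$ (the quantitative form of the estimates behind Proposition 1 and (9)), the higher $a_k$ contributing still smaller amounts. Consequently $A_m(\lambda_{n,2})-A_m(\lambda_{n,1})=O(n^{-2})(\lambda_{n,2}-\lambda_{n,1})$, and substituting this into the previous identity produces the self-improving inequality
\[
(\lambda_{n,2} - \lambda_{n,1})\left(1 - O(n^{-2})\right) = O(n^{-m}).
\]
For $n$ large the factor $1-O(n^{-2})$ is bounded below by $1/2$, so $|\Delta_n|=\lambda_{n,2}-\lambda_{n,1}=O(n^{-m})$. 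Replacing $O(n^{-m})$ by $o(n^{-m})$ throughout the hypothesis gives $|\Delta_n|=o(n^{-m})$ in the same way. I expect the derivative--difference estimate for $A_m$ to be the only nontrivial step; everything else is subtraction and absorption of remainders.
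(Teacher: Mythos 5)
Your part $(b)$ is essentially the paper's own argument: the paper likewise divides (7) by $u_{n,j}$, uses the mean-value theorem together with the derivative bound $A_m'(\lambda)=O(n^{-2})$ on $(\lambda_{n,1},\lambda_{n,2})$ to get $A_m(\lambda_{n,2})-A_m(\lambda_{n,1})=(\lambda_{n,2}-\lambda_{n,1})O(n^{-2})$, and arrives at the same self-improving identity $(\lambda_{n,2}-\lambda_{n,1})(1+O(n^{-2}))=O(n^{-m})$ (its displayed equation (55)). You correctly identified the derivative--difference estimate for $A_m$ as the crux of that half.

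In part $(a)$, however, there is a gap. The statement to be proved includes the assertion that $\lambda_{n,j}$ satisfies (54), and you simply declare that ``the successive elimination of $\lambda_{n,j}$ from the right-hand side produces (54) for every $m$.'' That elimination is exactly what the paper's proof of $(a)$ consists of: an induction on $m$ in which one substitutes $\lambda_{n,j}=E_{n,j,k-1}+O((\ln n/n)^{k})$ into (53) for $m=k$ and must then verify the stability estimates
\[
A_{k}\Bigl(E_{n,j,k-1}+O\bigl(\tfrac{\ln^{k} n}{n^{k}}\bigr)\Bigr)=A_{k}(E_{n,j,k-1})+O\bigl(\tfrac{\ln^{k+1} n}{n^{k+1}}\bigr),\qquad
B_{k}\Bigl(E_{n,j,k-1}+O\bigl(\tfrac{\ln^{k} n}{n^{k}}\bigr)\Bigr)=B_{k}(E_{n,j,k-1})+O\bigl(\tfrac{\ln^{k+1} n}{n^{k+1}}\bigr),
\]
which the paper handles by the same computation as in Proposition 1: perturbing the spectral parameter by $\epsilon$ changes each $a_k$, $b_k$ by $O(n^{-2}\epsilon)$ because of the extra denominator factor it introduces. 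Without this, passing from (53) --- where $A_m$ and $B_m$ are still evaluated at the unknown $\lambda_{n,j}$ --- to (54) is unjustified, and taking (54) as already available is circular, since (54) is part of what Theorem 9$(a)$ asserts. The irony is that you prove precisely the needed derivative bound in your part $(b)$; deploying it inductively in part $(a)$ would close the gap. Once (54) is in hand, your subtraction of the $j=1$ and $j=2$ instances and your appeal to Remark 2 for the ``moreover'' clause match the paper exactly.
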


\begin{proof}
$(a)$ We prove (54) by induction. It is proved for $m=1$ (see Corollary 1$(b)$
and Theorem 6). Assume that (54) is true for $m=k-1$. Substituting the value
\[
E_{n,j,k-1}+O((\frac{\ln n}{n})^{k})
\]
of $\lambda_{n,j},$ given by (54) for $m=k-1$ in (53) for $\ m=k$ we obtain
\[
\lambda_{n,j}=(2\pi n)^{2}+A_{k}(E_{n,j,k-1}+O((\frac{\ln n}{n})^{k}))+
\]%
\[
(-1)^{j}\left\vert q_{2n}+B_{m}(E_{n,j,k-1}+O((\frac{\ln n}{n})^{k}%
))\right\vert +O((\frac{\ln n}{n})^{k+1}).
\]
Now, to prove (54) for $m=k,$ it remains to show that
\[
A_{k}(E_{n,j,k-1}+O((\frac{\ln\left\vert n\right\vert }{n})^{k}))=A_{k}%
(E_{n,j,k-1})+O((\frac{\ln\left\vert n\right\vert }{n})^{k+1})
\]
and%
\[
B_{k}(E_{n,j,k-1}+O((\frac{\ln\left\vert n\right\vert }{n})^{k}))=B_{k}%
(E_{n,j,k-1})+O((\frac{\ln\left\vert n\right\vert }{n})^{k+1}).
\]
These equalities can be proven by reasoning in the same way as in the proof of
Proposition 1. The estimate for $\left\vert \Delta_{n}\right\vert $ follows
from (54). The second statement of the theorem follows from Remark 2.

$(b)$ Arguing as in the proof of Proposition 1 one can easily verify that the
derivative of $A_{m}$ at point $\lambda\in(\lambda_{n,1},\lambda_{n,2})$ is
$O(n^{-2}).$ Therefore, by mean-value theorem we have
\[
A_{m}(\lambda_{n,2})-A_{m}(\lambda_{n,1})=(\lambda_{n,2}-\lambda
_{n,1})O(n^{-2}).
\]
Thus, from (7) it follows that
\begin{equation}
(\lambda_{n,2}-\lambda_{n,1})(1+O(n^{-2}))=(q_{2n}+B_{m}(\lambda_{n,2}%
))\frac{\overline{u_{n,2}}}{u_{n,2}}- \tag{55}%
\end{equation}%
\[
(q_{2n}+B_{m}(\lambda_{n,1}))\frac{\overline{u_{n,1}}}{u_{n,1}}+O((\frac
{\ln\left\vert n\right\vert }{n})^{m+1})
\]
that gives the proof of $(b).$
\end{proof}

In a similar way we obtain similar asymptotic formulas for the eigenvalues
$\mu_{n,1}$ and $\mu_{n,2}$\ of $L_{\pi}(q)$ and for the length $\left\vert
\Omega_{n}\right\vert =\mu_{n,2}-\mu_{n,1}$ of the gap $(\mu_{n,1},\mu
_{n,2}).$

It follows from (55) and (51) that
\[
\lambda_{n,2}-\lambda_{n,1}=(2q_{2n}+B_{m}(\lambda_{n,2})+B_{m}(\lambda
_{n,1}))\frac{\overline{u_{n,2}}}{u_{n,2}}(1+O(n^{-2}))+
\]%
\[
(q_{2n}+B_{m}(\lambda_{n,1}))O(n^{-2})+O((\frac{\ln\left\vert n\right\vert
}{n})^{m+1}).
\]
using this equality, (16) and (17) and repeating the proof of Theorem 8 we obtain

\begin{theorem}
If (15) is satisfied then
\[
\left\vert \Delta_{n}\right\vert =2\left\vert q_{2n}-S_{2n}+2Q_{0}%
Q_{2n}\right\vert +o(n^{-s-2})
\]
and
\[
\left\vert \Omega_{n}\right\vert =2\left\vert q_{2n+1}-S_{2n+1}+2Q_{0}%
Q_{2n+1}\right\vert +o(n^{-s-2}).
\]

\end{theorem}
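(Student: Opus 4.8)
The plan is to start from the unconditional relation for the gap length that the text derives just before the statement by combining (55) and (51), namely
\[
\lambda_{n,2}-\lambda_{n,1}=(2q_{2n}+B_{m}(\lambda_{n,2})+B_{m}(\lambda_{n,1}))\frac{\overline{u_{n,2}}}{u_{n,2}}(1+O(n^{-2}))+(q_{2n}+B_{m}(\lambda_{n,1}))O(n^{-2})+O\!\left((\tfrac{\ln n}{n})^{m+1}\right),
\]
which holds for all large $n$ with no lower-bound hypothesis on $q_{2n}$. First I would fix the order $m$ large enough that the last error is absorbed into the target error term; since $(\ln n/n)^{m+1}=o(n^{-s-2})$ as soon as $m\geq s+2$, I take $m=s+2$.

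Next I would evaluate $B_m(\lambda_{n,j})=\sum_{k=1}^m b_k(\lambda_{n,j})$ using hypothesis (15). By (16), $b_1(\lambda_{n,j})=-S_{2n}+2Q_0Q_{2n}+o(n^{-s-2})$, and by (17) we have $b_2(\lambda_{n,j})=o(n^{-s-2})$ together with $b_k(\lambda_{n,j})=o(\ln^k n/n^{k+s})=o(n^{-s-2})$ for $3\le k\le m$. Summing gives $B_m(\lambda_{n,j})=-S_{2n}+2Q_0Q_{2n}+o(n^{-s-2})$ for both $j=1,2$, hence
\[
2q_{2n}+B_{m}(\lambda_{n,2})+B_{m}(\lambda_{n,1})=2\left(q_{2n}-S_{2n}+2Q_0Q_{2n}\right)+o(n^{-s-2}).
\]

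Before taking moduli I would dispose of the two leftover $O(n^{-2})$ factors. Integrating $q_n=\int_0^1 q(x)e^{-i2\pi nx}\,dx$ by parts $s$ times and using the matching conditions (15) (the boundary terms vanish through order $s-1$) together with the Riemann--Lebesgue lemma applied to $q^{(s)}\in L_1$, one gets $q_{2n}=o(n^{-s})$; the same integration-by-parts argument applied to $Q$ (periodic by (2)) and to $S=Q^2$ gives $Q_{2n},S_{2n}=o(n^{-s})$. Consequently $q_{2n}-S_{2n}+2Q_0Q_{2n}=o(n^{-s})$, so both $(q_{2n}+B_m(\lambda_{n,1}))O(n^{-2})$ and the contribution of the factor $(1+O(n^{-2}))$ are $o(n^{-s-2})$. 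The displayed relation therefore collapses to
\[
\lambda_{n,2}-\lambda_{n,1}=2\left(q_{2n}-S_{2n}+2Q_0Q_{2n}\right)\frac{\overline{u_{n,2}}}{u_{n,2}}+o(n^{-s-2}).
\]

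Finally, since $u_{-n,j}=\overline{u_{n,j}}$ gives $\left|\overline{u_{n,2}}/u_{n,2}\right|=1$, and since $\lambda_{n,2}-\lambda_{n,1}=|\Delta_n|$ is real and nonnegative, I would take absolute values of both sides: the left side equals its own modulus, and on the right an $o(n^{-s-2})$ perturbation of a number of modulus $2|q_{2n}-S_{2n}+2Q_0Q_{2n}|$ has modulus $2|q_{2n}-S_{2n}+2Q_0Q_{2n}|+o(n^{-s-2})$ by the triangle inequality. This yields the first formula; the formula for $|\Omega_n|$ follows verbatim from the antiperiodic analogues of (55), (51), (16) and (17), with $2n$ replaced by $2n+1$. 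I expect the only genuine subtlety to be the third step: one must verify that the coefficient multiplying the leftover $O(n^{-2})$ truly decays like $o(n^{-s})$, which is exactly where the smoothness and periodicity hypothesis (15) enters, and without which that cross term would fail to be negligible relative to $o(n^{-s-2})$.
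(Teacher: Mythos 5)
Your proposal is correct and follows essentially the same route as the paper: it starts from the gap identity obtained by combining (55) and (51), inserts the expansions (16)--(17) for $B_{m}(\lambda_{n,j})$, and takes moduli using $\left\vert \overline{u_{n,2}}/u_{n,2}\right\vert =1$. The one place you go beyond the paper's very terse argument is the verification that $q_{2n}$, $S_{2n}$ and $Q_{2n}$ are $o(n^{-s})$ under (15), which is exactly the detail needed to absorb the $(q_{2n}+B_{m}(\lambda_{n,1}))O(n^{-2})$ cross term into $o(n^{-s-2})$; the paper leaves this implicit in the phrase ``repeating the proof of Theorem 8,'' and your filling it in is accurate.
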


Now we consider the eigenfunctions $\varphi_{n,j}(x)=\frac{\Psi_{n,j}%
(x)}{u_{n,j}}.$ Assume that (43) holds. Writing the decomposition of
$\Psi_{n,j}(x)$ by the orthonormal basis $\{e^{i2\pi(n-n_{1})x}:n_{1}%
\in\mathbb{Z}\}$ we obtain
\[
\Psi_{n,j}(x)-u_{n,j}e^{i2\pi nx}=\sum_{\substack{n_{1}=-\infty,\\n_{1}\neq
0}}^{\infty}(\Psi_{n,j}(x),e^{i2\pi(n-n_{1})x})e^{i2\pi(n-n_{1})x}.
\]

The right-hand side of this formula can be obtained from the right-hand side
of (6) by replacing $q_{n_{1}}$ with $e^{i2\pi(n-n_{1})x}.$ Since we obtained
(7) from (6) by iteration, doing the same iterations and the same estimations
we obtain
\[
\Psi_{n,j}(x)=u_{n,j}e^{i2\pi nx}+\overline{u_{n,j}}e^{-i2\pi nx}+
\]%
\[
u_{n,j}A_{m}^{\ast}(\lambda_{n,j})+\overline{u_{n,j}}B_{m}^{\ast}%
(\lambda_{n,j})+O((\frac{\ln\left\vert n\right\vert }{n})^{m+1}),
\]
where $A_{m}^{\ast}$ and $B_{m}^{\ast}$ are obtained from $A_{m}$ and $B_{m}$
respectively by replacing $q_{n_{1}}$ with $e^{i2\pi(n-n_{1})x}.$ Dividing
both sides of this equality by $u_{n,j}$ we obtain
\[
\varphi_{n,j}(x)=e^{i2\pi nx}+A_{m}^{\ast}(\lambda_{n,j})+
\]%
\[
+\frac{\overline{u_{n,j}}}{u_{n,j}}(e^{-i2\pi nx}+B_{m}^{\ast}(\lambda
_{n,j}))+O((\frac{\ln n}{n})^{m+1}),
\]
where $\frac{\overline{u_{n,j}}}{u_{n,j}}$ can be estimated as follows. It
follows from (7) that
\[
\frac{\overline{u_{n,j}}}{u_{n,j}}=\frac{(\lambda_{n,j}-(2\pi n)^{2}%
-A_{m}(\lambda_{n,j}))}{q_{2n}+B_{m}(\lambda_{n,j})}+O\left(  \frac{1}%
{q_{2n}+B_{m}(\lambda_{n,j})}(\frac{\ln\left\vert n\right\vert }{n}%
)^{m+1}\right)  .
\]
On the other hand, by (16)-(17) and (43) there exist a positive constant
$c_{3}$ such that
\[
\frac{1}{\left\vert q_{2n}+B_{m}(\lambda_{n,j})\right\vert }\leq c_{3}n.
\]
Therefore the last equality can by written in the form
\[
\frac{\overline{u_{n,j}}}{u_{n,j}}=\frac{(\lambda_{n,j}-(2\pi n)^{2}%
-A_{m}(\lambda_{n,j}))}{q_{2n}+B_{m}(\lambda_{n,j})}+O\left(  n(\frac
{\ln\left\vert n\right\vert }{n})^{m+1}\right)  .
\]
Now, using (53) and the last two equalities of the proof of Theorem 9$(a)$ we
get
\[
\frac{\overline{u_{n,j}}}{u_{n,j}}=\frac{(E_{n,j,m}-(2\pi n)^{2}%
-A_{m}(E_{n,j,m}))}{(q_{2n}+B_{m}(E_{n,j,m}))}+O\left(  n(\frac{\ln\left\vert
n\right\vert }{n})^{m+1}\right)  .
\]
Moreover, reasoning in the same way as when proving the last two equalities in
the proof of Theorem 9$(a)$, it is easy to verify that%
\[
A_{m}^{\ast}(\lambda_{n,j})=A_{m}^{\ast}(E_{n,j,m})+O((\frac{\ln\left\vert
n\right\vert }{n})^{m+1})
\]
and%
\[
B_{m}^{\ast}(\lambda_{n,j})=B_{m}^{\ast}(E_{n,j,m})+O((\frac{\ln\left\vert
n\right\vert }{n})^{m+1}).
\]
Thus we have the following formula for the eigenfunction $\varphi_{n,j}(x)$ of
$L_{0}(q)$%
\[
\varphi_{n,j}(x)=e^{i2\pi nx}+A_{m}^{\ast}(E_{n,j,m})+
\]%
\[
\frac{(E_{n,j,m}-(2\pi n)^{2}-A_{m}(E_{n,j,m}))}{(q_{2n}+B_{m}(E_{n,j,m}%
))}(e^{-i2\pi nx}+B_{m}^{\ast}(E_{n,j,m}))+h_{m}(x),
\]
for $m=2,3,...$, where $\left\Vert h_{m}\right\Vert =O(n(\frac{\ln\left\vert
n\right\vert }{n})^{m+1}).$ In the similar way we obtain the similar
asymptotic formulas for the eigenfunctions of $L_{\pi}(q)$.

\section{On the Kronig-Penney model}

Now, we take the Kronig-Penney model as an example and use this example to
illustrate the obtained results. The Kronig-Penney model is a simplified model
of the electron in a one-dimensional periodic potential and has been studied
in many works (see, for example, [5], [7, Chap.3] and [13, Chap.21]). In the
case of the Kronig-Penney model, the potential $q(x)$ has the form
\[
q(x)=\left\{
\begin{tabular}
[c]{l}%
$a\text{ if }x\in\lbrack0,c]$\\
$b\text{ if }x\in(c,d]$%
\end{tabular}
\ \ \ \ \ \ \ \ \ \ \ \ \ \ \ \ \ \ \ \ \right.
\]
and $q(x+d)=q(x)$ where $c\in(0,d).$ For simplicity of notation and without
loss of generality, we assume that $d=1,$ $a<b$ and (2) is satisfied. Then we
have
\begin{align}
q(x)  &  =\left\{
\begin{tabular}
[c]{l}%
$a\text{ if }x\in\lbrack0,c]$\\
$b\text{ if }x\in(c,1]$%
\end{tabular}
\ \ \ \ \ \ \ \ \ \ \ \ \ \ \ \ \ \ ,\ \ \right. \tag{56}\\
a  &  <0<b,\text{ }ac+(1-c)b=0.\nonumber
\end{align}
Let us calculate the first and second terms of the asymptotic formulas for the
eigenvalues and gaps in the spectrum of $L(q)$ when $q$ is defined by formula
(56). To write the gap length in a compact form and for the brevity of some
calculation, it is convenient to introduce the following notations%
\[
\delta_{k}=\left\{
\begin{tabular}
[c]{l}%
$\left\vert \Delta_{n}\right\vert \text{ if }k=2n$\\
$\left\vert \Omega_{n}\right\vert \text{ if }k=2n+1$%
\end{tabular}
\ \ \ \ \ \ \ \ \ \ \ \ \ \ \ \ \ \ ,\ \ \right.
\]
where $n\geq N$ and $\ N$ is sufficiently large positive number. By Theorem 8
we have
\begin{equation}
\delta_{k}=2\left\vert q_{k}-S_{k}+2Q_{0}Q_{k}\right\vert +o(k^{-2}). \tag{57}%
\end{equation}
Thus $\delta_{k}$ for even $\ k=2n$ and odd $k=2n+1,$ respectively, is the
length of the gap lying in the vicinity of the periodic and antiperiodic
eigenvalues $(2\pi n)^{2}$ and $((2n+1)\pi)^{2}$ of the unperturbed operator
$L(0).$ According to (57), to estimate the length of the gaps we need to
consider the Fourier coefficients of the functions
\[
q(x),\text{ }Q(x)=\int_{0}^{x}q(t)\,dt,\quad S(x)=Q^{2}(x).
\]
From (56) it follows that
\begin{equation}
q_{k}=\int\limits_{0}^{c}ae^{-2\pi kix}dx+\int\limits_{c}^{1}be^{-2\pi
kix}dx=\frac{a-b}{2\pi ki}(1-e^{-2\pi kic}), \tag{58}%
\end{equation}%
\begin{equation}
Q(x)=\left\{
\begin{tabular}
[c]{l}%
$ax\text{ if }x\in\lbrack0,c],$\\
$bx-b\text{ if }x\in(c,1],$%
\end{tabular}
\ \ \ \ \ \ \ \ \ \ \ \ \ \ \ \ \ \ \ \ \ \right.  , \tag{59}%
\end{equation}
and%
\begin{equation}
S(x)=\left\{
\begin{tabular}
[c]{l}%
$a^{2}x^{2}\text{ if }x\in\lbrack0,c],$\\
$(bx-b)^{2}\text{ if }x\in(c,1],$%
\end{tabular}
\ \ \ \ \ \ \ \ \ \ \ \ \ \ \ \ \ \ \ \ \ \right.  . \tag{60}%
\end{equation}
Now, let us calculate the Fourier coefficients
\[
Q_{k}=\int\limits_{0}^{1}Q(x)e^{-2\pi ikx}dx\
\]
\ and
\[
S_{k}=\int\limits_{0}^{1}S(x)e^{-2\pi ikx}dx
\]
of $Q$ and $S$. From (59) it follows that
\[
Q_{0}=\int\limits_{0}^{1}Q(x)\,dx=\int\limits_{0}^{c}ax\,dx+\int
\limits_{c}^{1}bx-b\,dx=\frac{1}{2}(a-b)c^{2}+\frac{b}{2}-b(1-c).
\]
Now, using the equality $(a-b)c=-b$ (see (56)) we get
\[
Q_{0}=\frac{1}{2}b(c-1).
\]
From (59) and (60) by direct calculation one can find $Q_{k}$ for $k\neq
0$\ and $S_{k}$. We calculate these Fourier coefficients using (58) and the
following relations%
\[
Q(1)=\int_{0}^{1}q(x)\,dx=0=Q(0),\text{ }Q^{^{\prime}}(x)=q(x)
\]
and $S(1)=S(0)=0,$ $S^{^{\prime}}(x)=2Q(x)q(x).$ Thus, we have%
\begin{equation}
Q_{k}=\frac{q_{k}}{2\pi ki}=\frac{a-b}{(2\pi k)^{2}}(e^{-2\pi kic}-1) \tag{61}%
\end{equation}
and%

\begin{equation}
S_{k}=\frac{1}{2\pi ki}\int\limits_{0}^{1}2Q(x)q(x)e^{-2\pi ikx}\,dx, \tag{62}%
\end{equation}
where by (56) and (59)
\[
\int\limits_{0}^{1}Q(x)q(x)e^{-2\pi ikx}\,dx=(\int\limits_{0}^{c}%
a^{2}xe^{-2\pi ikx}\,dx+\int\limits_{c}^{1}b^{2}xe^{-2\pi ikx}\,dx-\int
\limits_{c}^{1}b^{2}e^{-2\pi ikx}\,dx),
\]%
\[
\int\limits_{0}^{c}xe^{-2\pi ikx}\,dx=\frac{(e^{-2\pi ikc}-1)}{(2\pi k)^{2}%
}\,-\frac{ce^{-2\pi ikc}}{2\pi ik},
\]%
\[
\int\limits_{c}^{1}xe^{-2\pi ikx}\,dx=\frac{(1-e^{-2\pi ikc})}{(2\pi k)^{2}%
}\,+\frac{ce^{-2\pi ikc}-1}{2\pi ik}%
\]
and%
\[
\int\limits_{c}^{1}e^{-2\pi ikx}\,dx=\frac{e^{-2\pi ikc}-1}{2\pi ik}.
\]
Using the last four equalities in (62) and then using the $O(k^{-3})$
notation, we obtain
\[
S_{k}=\frac{a^{2}}{\pi ki}(\frac{e^{-2\pi ikc}-1}{(2\pi k)^{2}}\,-\frac
{ce^{-2\pi ikc}}{2\pi ik})+
\]%
\[
\frac{b^{2}}{\pi ki}(\frac{(1-e^{-2\pi ikc})}{(2\pi k)^{2}}\,+\frac{ce^{-2\pi
ikc}-1}{2\pi ik})-\frac{b^{2}}{\pi ki}(\frac{e^{-2\pi ikc}-1}{2\pi ik})=
\]%
\[
\frac{e^{-2\pi ikc}}{(2\pi k)^{2}}(2a^{2}c-2b^{2}c+2b^{2})+O(k^{-3}).
\]
Moreover, using the equality $ac-bc=-b$ we get%
\[
2a^{2}c-2b^{2}c+2b^{2}=-2ab,\text{ }S_{k}=\frac{-2ba}{(2\pi k)^{2}}e^{-2\pi
ikc}+O(k^{-3}).
\]
Now using this (58) and (61), we obtain%

\begin{equation}
q_{k}-S_{k}+2Q_{0}Q_{k}=\frac{a-b}{2\pi ki}(1-e^{-2\pi kic})+\frac{2ba}{(2\pi
k)^{2}}e^{-2\pi ikc}+ \tag{63}%
\end{equation}%
\[
\frac{e^{-2\pi ikc}-1}{(2\pi k)^{2}}b(a-b)(c-1)+O(k^{-3}).
\]

It is easy to see that there are two different cases: $e^{-2\pi kic}\neq1$ and
$e^{-2\pi kic}=1,$ which depend on choice $c.$ This is easy to take into
account for rational $c$.

\begin{theorem}
Let $c=\frac{p}{m}\in(0,1),$ where $p$ and $m$ are irreducible positive integers.

$(a)$ \ If $k=ms$ for some $s\in\mathbb{N},$ then
\[
\delta_{k}=\frac{-2ba}{(2\pi k)^{2}}+o(\frac{1}{k^{2}})\sim\frac{1}{k^{2}}.
\]

$(b)$ If $k=ms+l,$ where $l=1,2,...,m-1,$ then
\[
\delta_{k}=\left\vert \frac{a-b}{2\pi k}(1-e^{-2\pi kic})\right\vert
+O(\frac{1}{k^{2}})\sim\frac{1}{k}%
\]

\end{theorem}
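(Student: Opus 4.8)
The plan is to specialize the general gap formula (57) of Theorem 8 to the explicit Fourier data (63) already computed for the Kronig-Penney potential, and then to resolve the exponential $e^{-2\pi kic}$ by exploiting the rationality $c=p/m$ with $p,m$ coprime. In this way the whole argument reduces to a case analysis of the value $e^{-2\pi kic}$ according to the residue of $k$ modulo $m$, followed by taking the modulus in (57).

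I would first treat part $(a)$. If $k=ms$ then $kc=sp\in\mathbb{Z}$, so $e^{-2\pi kic}=1$ and the two terms in (63) carrying the factor $(1-e^{-2\pi kic})$ vanish identically, whence
\[
q_k-S_k+2Q_0Q_k=\frac{2ba}{(2\pi k)^2}+O(k^{-3}).
\]
Since $a<0<b$ by (56), the product $ba$ is negative, so the modulus of this quantity is $\frac{-2ba}{(2\pi k)^2}+O(k^{-3})$; substituting into (57) produces a gap length of exact order $k^{-2}$, which is the assertion of part $(a)$.

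For part $(b)$, with $k=ms+l$ and $1\leq l\leq m-1$, I would observe that $kc\equiv lp/m\pmod 1$, and since $\gcd(p,m)=1$ and $0<l<m$ force $m\nmid lp$, we get $e^{-2\pi kic}=e^{-2\pi ilp/m}\neq1$. Consequently the leading term $\frac{a-b}{2\pi ki}(1-e^{-2\pi kic})$ of (63) is genuinely of order $k^{-1}$, while the remaining two terms are $O(k^{-2})$; taking the modulus in (57) then yields the stated expression of order $k^{-1}$.

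The hard part will be to ensure, in part $(b)$, that the factor $1-e^{-2\pi kic}$ stays bounded away from $0$ as $k\to\infty$, since otherwise the claimed order $k^{-1}$ could silently degrade. This is precisely where the coprimality of $p$ and $m$ is used: as $l$ runs through $1,\dots,m-1$ the residues $lp\bmod m$ run through a permutation of $1,\dots,m-1$, so $e^{-2\pi ilp/m}$ ranges over exactly the nontrivial $m$-th roots of unity, and therefore
\[
|1-e^{-2\pi kic}|\geq|1-e^{-2\pi i/m}|=2\sin(\pi/m)>0
\]
uniformly in $k$. This uniform lower bound, together with the trivial estimate $|1-e^{-2\pi kic}|\leq2$, pins the leading term at order $k^{-1}$ and lets the $O(k^{-2})$ remainder be absorbed, completing the estimate.
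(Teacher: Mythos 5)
Your argument is correct and follows essentially the same route as the paper: specialize (57) and (63), and split into cases according to whether $m$ divides $k$, using the coprimality of $p$ and $m$ to ensure $e^{-2\pi i pl/m}$ is a nontrivial $m$-th root of unity in case $(b)$. Your explicit uniform lower bound $|1-e^{-2\pi kic}|\geq 2\sin(\pi/m)$ is just a sharper phrasing of the paper's observation that $pl/m = s + v/m$ with $v\in\{1,\dots,m-1\}$, hence $e^{-2\pi i pl/m}-1\sim 1$.
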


\begin{proof}
$(a)$ In this case
\[
e^{-2\pi kic}=e^{-2\pi psi}=1.
\]
Therefore, the proof follows from (57) and (63).

$(b)$ In this case
\[
e^{-2\pi kic}=e^{-2\pi i\frac{p}{m}(ms+l)}=e^{-2\pi i\frac{pl}{m}}.
\]
Consider $\frac{pl}{m}.$ Since $p$ and $m$ are irreducible positive integers,
none of the factors $m_{1},m_{2},...,m_{j}$ of $m$ is a factors of $p.$
Moreover, all of these factors cannot be factors of $l$ at the same time,
since $l<m.$ Therefore $\frac{pl}{m}=s+\frac{v}{m},$ where $v\in\left\{
1,2,...,m-1\right\}  .$ This implies that%
\[
e^{-2\pi i\frac{pl}{m}}-1\sim1
\]
and
\[
\frac{a-b}{2\pi ki}(1-e^{-2\pi kic})\sim\frac{1}{k}.
\]
On the other hand, remaining terms in the right side of (63) are equal to
$O(\frac{1}{k^{2}}).$Thus, the proof follows from (57) and (63).
\end{proof}

Using this theorem, consider the following interesting example.

\begin{example}
Let $c=1/2.$ Then $\left\vert \Delta_{k}\right\vert \sim\frac{1}{k^{2}}$ and
$\left\vert \Omega_{k}\right\vert \sim\frac{1}{k}.$
\end{example}

Thus, in this case, the lengths of the gaps lying in the vicinity of the
periodic and antiperiodic eigenvalues $(2\pi k)^{2}$ and $(2\pi k+\pi)^{2},$
respectively, are of the order of $\frac{1}{k^{2}}$ and $\frac{1}{k}.$

\end{document}